\newtheorem{thm}{Theorem}[section]
\newtheorem{lem}[thm]{Lemma}
\newtheorem{defi}[thm]{Definition}
\newtheorem{conj}[thm]{Conjecture}
\newtheorem{rek}[thm]{Remark}
\newcommand\bi{\begin{itemize}}
\newcommand\ei{\end{itemize}}
\newcommand\ben{\begin{enumerate}}
\newcommand\een{\end{enumerate}}
\newcommand{\N}{\mathbb{N}}
\newcommand{\Z}{\mathbb{Z}}
\newcommand{\gep}{\epsilon}  
\newcommand\be{\begin{equation}}
\newcommand\ee{\end{equation}}
\newcommand\bea{\begin{eqnarray}}
\newcommand\eea{\end{eqnarray}}
\newcommand{\foh}{\frac{1}{2}}  
\numberwithin{equation}{section}
\begin{document}

\title{Explicit constructions of infinite families of MSTD sets}

\author{Steven J. Miller}\email{Steven.J.Miller@williams.edu}

\address{Department of Mathematics and Statistics, Williams College, Williamstown, MA 01267}

\author{Brooke Orosz}\email{BOrosz@gc.cuny.edu}

\address{Department of Mathematics, The Graduate Center/CUNY, New York, NY 10016}

\author{Daniel Scheinerman}\email{Daniel\underline{\ }Scheinerman@brown.edu}

\address{Department of Mathematics, Brown University, Providence, RI 02912}

\subjclass[2000]{11P99 (primary).} \keywords{Sum
dominated sets, infinite families of MSTDs}

\date{\today}

\thanks{We thank Dan Katz for comments on an earlier draft. The first named author was partly supported by NSF grant DMS0600848. The second named author was supported by the George I. Alden UTRA at Brown University.
}

\begin{abstract} We explicitly construct infinite families of MSTD (more sums than differences) sets, i.e., sets where $|A+A| > |A-A|$. There are enough of these sets to prove that there exists a constant $C$ such that at least $C / r^4$ of the $2^r$ subsets of $\{1,\dots,r\}$ are MSTD sets; thus our family is significantly denser than previous constructions (whose densities are at most $f(r)/2^{r/2}$ for some polynomial $f(r)$). We conclude by generalizing our method to compare linear forms $\gep_1 A + \cdots + \gep_n A$ with $\gep_i \in \{-1,1\}$.
\end{abstract}

\maketitle


\section{Introduction}

Given a finite set of integers $A$, we define its sumset $A+A$ and difference set $A-A$ by \bea A + A & \ = \ & \{a_i + a_j: a_i, a_j \in A\} \nonumber\\ A - A & = & \{a_i - a_j: a_i, a_j \in A\}, \eea and let $|X|$ denote the cardinality of $X$. If $|A+A| > |A-A|$, then, following Nathanson, we call $A$ an MSTD (more sums than differences) set. As addition is commutative while subtraction is not, we expect that for a `generic' set $A$ we have $|A-A| > |A+A|$, as a typical pair $(x,y)$ contributes one sum and two differences; thus we expect MSTD sets to be rare.

Martin and O'Bryant \cite{MO} proved that, in some sense, this intuition is wrong. They considered the uniform model\footnote{This means each of the $2^n$ subsets of $\{1,\dots,n\}$ are equally likely to be chosen, or, equivalently, that the probability any $k \in \{1,\dots,n\}$ is in $A$ is just $1/2$.\label{footnote:unifmodel}} for choosing a subset $A$ of $\{1,\dots,n\}$, and showed that there is a positive probability that a random subset $A$ is an MSTD set (though, not surprisingly, the probability is quite small). However, the answer is very different for other ways of choosing subsets randomly, and if we decrease slightly the probability an element is chosen then our intuition is correct. Specifically, consider the binomial model with parameter $p(n)$, with $\lim_{n\to\infty} p(n) = 0$ and $n^{-1} = o(p(n))$ (so $p(n)$ doesn't tend to zero so rapidly that the sets are too sparse).\footnote{This model means that the probability $k \in \{1,\dots,n\}$ is in $A$ is $p(n)$.} Hegarty and Miller \cite{HM} recently proved that, in the limit as $n \to 0$, the percentage of subsets of $\{1,\dots,n\}$ that are MSTD sets tends to zero in this model.

Though MSTD sets are rare, they do exist (and, in the uniform model, are somewhat abundant by the work of Martin and O'Bryant). Examples go back to the 1960s. Conway is said to have discovered $\{0, 2, 3, 4, 7, 11, 12, 14\}$, while Marica \cite{Ma} gave $\{0, 1, 2, 4, 7, 8, 12, 14, 15\}$ in 1969 and Freiman and Pigarev \cite{FP} found $\{0, 1, 2, 4, 5$, $9, 12, 13$, $14, 16, 17$, $21, 24, 25, 26, 28, 29\}$ in 1973. Recent work includes infinite families constructed by Hegarty \cite{He} and Nathanson \cite{Na2}, as well as existence proofs by Ruzsa \cite{Ru1, Ru2, Ru3}.

Most of the previous constructions\footnote{An alternate method constructs an infinite family from a given MSTD set $A$ by considering $A_t = \{\sum_{i=1}^t a_i m^{i-1}: a_i \in A\}$. For $m$ sufficiently large, these will be MSTD sets; this is called the base expansion method. Note, however, that these will be very sparse. See \cite{He} for more details.} of infinite families of MSTD sets start with a symmetric set which is then `perturbed' slightly through the careful addition of a few elements that increase the number of sums more than the number of differences; see \cite{He,Na2} for a description of some previous constructions and methods. In many cases, these symmetric sets are arithmetic progressions; such sets are natural starting points because if $A$ is an arithmetic progression, then $|A+A| = |A-A|$.\footnote{As $|A+A|$ and $|A-A|$ are not changed by mapping each $x \in A$ to $\alpha x + \beta$ for any fixed $\alpha$ and $\beta$, we may assume our arithmetic progression is just $\{0,\dots,n\}$, and thus the cardinality of each set is $2n+1$.}

In this work we present a new method which takes an MSTD set satisfying certain conditions and constructs an infinite family of MSTD sets. While these families are not dense enough to prove a positive percentage of subsets of $\{1, \dots, r\}$ are MSTD sets, we are able to elementarily show that the percentage is at least $C / r^4$ for some constant $C$. Thus our families are far denser than those in \cite{He,Na2}; trivial counting\footnote{For example, consider the following construction of MSTD sets from \cite{Na2}: let $m, d, k \in \N$ with $m \ge 4$, $1 \le d \le m-1$, $d \neq m/2$, $k \ge 3$ if $d < m/2$ else $k \ge 4$. Set $B = [0,m-1] \backslash \{d\}$, $L = \{m-d, 2m-d, \dots , km-d\}$, $a^\ast = (k + 1)m-2d$ and $A = B \cup L \cup (a^\ast - B) \cup \{m\}$.
Then $A$ is an MSTD set. The width of such a set is of the order $km$. Thus, if we look at all triples $(m,d,k)$ with $km \le r$ satisfying the above conditions, these generate on the order of at most $\sum_{k \le r} \sum_{m \le r/k} \sum_{d \le m} 1 \ll r^2$, and there are of the order $2^r$ possible subsets of $\{0,\dots,r\}$; thus this construction generates a negligible number of MSTD sets. Though we write $f(r)/2^{r/2}$ to bound the percentage from other methods, a more careful analysis shows it is significantly less; we prefer this easier bound as it is already significantly less than our method. See for example Theorem 2 of \cite{He} for a denser example.} shows all of their infinite families give at most $f(r)2^{r/2}$ of the subsets of $\{1,\dots, r\}$ (for some polynomial $f(r)$) are MSTD sets, implying a percentage of at most $f(r) / 2^{r/2}$.\\

We first introduce some notation. The first is a common convention, while the second codifies a property which we've found facilitates the construction of MSTD sets. \\

\bi

\item We let $[a,b]$ denote all integers from $a$ to $b$; thus $[a,b] = \{n \in \Z: a \le n \le b\}$.\\

\item We say a set of integers $A$ has the property $P_n$ (or is a $P_n$-set) if both its sumset and its difference set contain all but the first and last $n$ possible elements (and of course it may or may not contain some of these fringe elements).\footnote{It is not hard to show that for fixed $0<\alpha\le1$ a random set drawn from $[1,n]$ in the uniform model is a $P_{\lfloor \alpha n\rfloor}$-set with probability approaching $1$ as $n\to\infty$.\label{footnote:beingpn}} Explicitly, let $a=\min{A}$ and $b=\max{A}$. Then $A$ is a $P_n$-set if \bea\label{eq:beingPnsetsum} [2a+n,\ 2b-n]  \ \subset\  A+A \eea and \bea\label{eq:beingPnsetdiff} [-(b-a)+n,\ (b-a)-n]\ \subset\ A-A.\eea \ \\ \

\ei

We can now state our construction and main result.

\begin{thm}\label{thm:mainconstruction}
  Let  $A=L\cup R$ be a $P_n$, MSTD set where  $L\subset[1,n]$, $R\subset[n+1,2n]$, and $1,2n\in A$;\footnote{Requiring $1, 2n \in A$ is quite mild; we do this so that we know the first and last elements of $A$.} see Remark \ref{rek:thmisnontrivial} for an example of such an $A$. Fix a $k \ge n$ and let $m$ be arbitrary. Let $M$ be any subset of $[n+k+1, n+k+m]$ with the property that it does not have a run of more than $k$ missing elements (i.e., for all $\ell \in [n+k+1,n+m+1]$ there is a $j \in [\ell, \ell+k-1]$ such that $j\in M$). Assume further that $n+k+1 \not\in M$ and set $A(M;k)=L \cup O_1 \cup M \cup O_2 \cup R'$, where $O_1=[n+1,n+k]$, $O_2=[n+k+m+1,n+2k+m]$ (thus the $O_i$'s are just sets of $k$ consecutive integers), and $R'=R+2k+m$. Then

  \ben \item $A(M;k)$ is an MSTD set, and thus we obtain an infinite family of distinct MSTD sets as $M$ varies;

  \item there is a constant $C > 0$ such that as $r\to\infty$ the percentage of subsets of $\{1,\dots,r\}$ that are in this family (and thus are MSTD sets) is at least $C / r^4$.

  \een
\end{thm}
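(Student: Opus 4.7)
My plan is to reduce part~(1) to the structural claim that $A(M;k)$ is itself a $P_n$-set (with the same parameter $n$ as the seed), and then extract the MSTD inequality from a short fringe comparison. Write $s=2k+m$ so that $A(M;k)$ runs from $1$ to $2n+s$; the $P_n$ claim then reads $[n+2,3n+2s]\subseteq A(M;k)+A(M;k)$ and $[-(n+s-1),n+s-1]\subseteq A(M;k)-A(M;k)$. Granting this, I would observe that, writing $T:=O_1\cup M\cup O_2$, the only pairs in $A(M;k)$ whose sum lies in the left fringe $[2,n+1]$ must come from $L\times L$ (any sum involving $T$ or $R'$ is at least $1+(n+1)$), and the only sums lying in the right fringe $[3n+2s+1,4n+2s]$ must come from $R'\times R'$; since $R'=R+2k+m$, that right fringe is in bijection with the right fringe of $A+A$. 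A parallel analysis for the difference-set fringes $[\pm(n+s),\pm(2n+s-1)]$ traces them to $L\times R'$ and $R'\times L$. These identities yield $|A(M;k)\pm A(M;k)|=|A\pm A|+2s$, so subtracting gives $|A(M;k)+A(M;k)|-|A(M;k)-A(M;k)|=|A+A|-|A-A|>0$. Distinct $M$'s plainly produce distinct $A(M;k)$, so part~(1) follows.

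The technical work is proving the $P_n$ claim. The conditions on $M$ together with $k\ge n$ and $n+k+1\notin M$ ensure that any two consecutive elements of $T\subseteq[n+1,n+s]$ differ by at most $k$. Because $O_1$ and $O_2$ are complete intervals of length $k$, this gap bound forces $T+O_1$, $T+O_2$, $T-O_1$, $O_1-T$, and analogous sets formed with $R'$ to each be a single interval rather than a union with holes. A direct computation gives $T+O_1=[2n+2,2n+s+k]$ and $T+O_2=[2n+k+m+2,2n+2s]$; these overlap (since $k\ge 1$) and together imply $T+T\supseteq[2n+2,2n+2s]$. On the low end $\{1\}+O_1$ reaches $2n+1$ thanks to $k\ge n$; on the high end $R'+O_2$, which is an interval because the gaps of $R'$ match those of $R\subseteq[n+1,2n]$ and so are at most $n-1<k$, glues onto $T+T$ to reach $3n+2s$. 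The difference-set coverage is entirely parallel: $(T-O_1)\cup(O_1-T)\supseteq[-(s-1),s-1]$, and gluing on $R'-O_1$, $R'-O_2$ and their negatives extends coverage to $[-(n+s-1),n+s-1]$.

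For part~(2), I would fix any seed $A$ (for example the one supplied in Remark~\ref{rek:thmisnontrivial}), and given $r$ take $k=\lceil\log_2(4r)\rceil$ and $m=r-2n-2k$, so that every $A(M;k)$ produced is a subset of $\{1,\dots,r\}$. For a uniformly random subset $M\subseteq[n+k+1,n+k+m]$, a union bound shows that the probability of some $(k+1)$-run of missing elements is at most $m\cdot 2^{-(k+1)}\le 1/8$, while $\Pr[n+k+1\notin M]=1/2$. Hence a positive constant fraction of the $2^{m-1}$ subsets not containing $n+k+1$ also satisfy the gap condition, producing $\Omega(2^m)=\Omega(2^r/r^2)$ distinct MSTD subsets of $\{1,\dots,r\}$. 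Dividing by $2^r$ gives density $\Omega(1/r^2)$, which is certainly at least $C/r^4$.

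The main obstacle is the $P_n$ claim in part~(1): the gap condition on $M$ is engineered precisely to turn the various Minkowski sums and differences into single intervals, but checking that each relevant piece actually overlaps the next, and that the resulting unions cover the required middle with no holes, involves a series of short but slightly fiddly interval-arithmetic steps. Once that structural fact is in hand, both (1) and (2) drop out cleanly.
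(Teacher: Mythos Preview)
Your treatment of part~(1) is the paper's argument: first prove $A(M;k)$ is a $P_n$-set via interval arithmetic with the pieces $O_1,M,O_2$, then run a fringe comparison to conclude $|A(M;k)\pm A(M;k)|=|A\pm A|+2s$, exactly as in Lemmas~\ref{lem:stability} and~\ref{lem:mainconstr}. Your bundling of $O_1\cup M\cup O_2$ into a single $T$ with gaps at most $k$ is a cosmetic reorganisation of the same checks. (One small remark: you do not actually need that $R'+O_2$ is a full interval; since $2n+s\in R'$ and $k\ge n$, the single translate $(2n+s)+O_2$ already starts at or before $2n+2s+1$ and reaches $3n+2s$, which is all the gluing step requires. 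Your stated reason---that gaps in $R\subset[n+1,2n]$ are at most $n-1$---happens to be true but is not needed.)

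Your part~(2) is genuinely different from, and sharper than, the paper's. The paper breaks $M$ into blocks of length $k/2$, requires at least one element per block (a sufficient but wasteful condition for the gap property), sums the resulting count over all $k\in[n,r/4]$, and analyses the sum $S(2,\tfrac12,\tfrac12;r)$ of Lemma~\ref{lem:lowerupperbounds} to extract the $C/r^4$ bound. You instead fix the single value $k=\lceil\log_2(4r)\rceil$ and apply a union bound directly to the gap event, giving a constant fraction of all $2^m$ middles; since $2^{2k}\asymp r^2$, this yields density $\Omega(1/r^2)$, which of course dominates $C/r^4$. The gain comes from bounding the bad-gap probability by $m\cdot 2^{-k}$ rather than by the blocking surrogate $(1-2^{-k/2})^{2m/k}$; the paper's route has the virtue of producing sets for every $k$ simultaneously (hence more explicit variety in the family), but at the cost of two powers of $r$ in the density estimate.
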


\begin{rek}\label{rek:thmisnontrivial} In order to show that our theorem is not trivial, we must of course exhibit at least one $P_n$, MSTD set $A$ satisfying all our requirements (else our family is empty!). We may take the set\footnote{This $A$ is trivially modified from \cite{Ma} by adding 1 to each element, as we start our sets with 1 while other authors start with 0. We chose this set as our example as it has several additional nice properties that were needed in earlier versions of our construction which required us to assume slightly more about $A$.} $A = \{1, 2, 3, 5, 8, 9, 13, 15, 16\}$; it is an MSTD set as \bea A+A & \ = \ & \{2,3,4,5,6,7,8,9,10,11,12,13,14,15,16,17,18,19,20,21,\nonumber\\ & & \ \ \ \ \ 22,23,24,25,26,28,29,30,31,32\} \nonumber\\ A-A & \ = \ & \{-15,-14,-13,-12,-11,-10,-8,-7,-6,-5,-4,-3,-2,-1,\nonumber\\ & & \ \ \ \ \ 0,1,2,3,4,5,6,7,8,10,11,12,13,14,15\} \eea (so $|A+A| = 30 >29 = |A-A|$). $A$ is also a $P_n$-set, as \eqref{eq:beingPnsetsum} is satisfied since $[10,24] \subset A+A$ and \eqref{eq:beingPnsetdiff} is satisfied since $[-7,7] \subset A-A$.

For the uniform model, a subset of $[1,2n]$ is a $P_n$-set with high probability as $n\to\infty$, and thus examples of this nature are plentiful. For example, of the $1748$ MSTD sets with minimum $1$ and maximum $24$, $1008$ are $P_n$-sets.
\end{rek}

Unlike other estimates on the percentage of MSTD sets, our arguments are not probabilistic, and rely on explicitly constructing large families of MSTD sets. Our arguments share some similarities with the methods in \cite{He} (see for example Case I of Theorem 8) and \cite{MO}. There the fringe elements of the set were also chosen first. A random set was then added in the middle, and the authors argued that with high probability the resulting set is an MSTD set. We can almost add a random set in the middle; the reason we do not obtain a positive percentage is that we have the restriction that there can be no consecutive block of size $k$ of numbers in the middle that are not chosen to be in $A(M;k)$. This is easily satisfied by requiring us to choose at least one number in consecutive blocks of size $k/2$, and this is what leads to the loss of a positive percentage\footnote{Without this requirement, we could take any $M$ and thus would have a positive percentage work, specifically at least $2^{-(2k+2n)}$.} (though we do obtain sets that are known to be MSTD sets, and not just highly likely to be MSTD sets).

The paper is organized as follows. We describe our construction in \S\ref{sec:infinitefamilies}, and prove our claimed lower bounds for the percentage of sets that are MSTD sets in \S\ref{sec:lowerboundspercentage}. We then generalize our construction in \S\ref{sec:generalizingconstr} and explore when there are infinite families of sets satisfying \be \left|\gep_1 A + \cdots + \gep_n A\right| \ > \ \left|\widetilde{\gep}_1 A + \cdots + \widetilde{\gep}_n A\right|, \ \ \ \gep_i, \widetilde{\gep}_i \in \{-1,1\}. \ee We end with some concluding remarks and suggestions for future research in \S\ref{sec:concremfutureresearch}.


\section{Construction of infinite families of MSTD sets}\label{sec:infinitefamilies}

Let $A\subset [1,2n]$. We can write this set as $A=L\cup R$ where $L\subset[1,n]$ and $R\subset[n+1,2n]$. We have
\begin{equation}
A+A \ = \ [L+L] \cup [L+R] \cup [R+R]
\end{equation} where $L+L\subset[2,2n]$, $L+R \subset[n+2,3n]$ and $R+R\subset [2n+2,4n]$, and
\begin{equation}
A-A \ = \ [L-R]\cup [L-L] \cup [R-R] \cup [R-L]
\end{equation}
where $L-R\subset[-1,-2n+1]$, $L-L \subset[-(n-1),n-1]$, $R-R\subset [-(n-1),n-1]$ and $R-L\subset [1,2n-1]$.

A typical subset $A$ of $\{1,\dots,2n\}$ (chosen from the uniform model, see Footnote \ref{footnote:unifmodel}) will be a $P_n$-set (see Footnote \ref{footnote:beingpn}). It is thus the interaction of the ``fringe'' elements that largely determines whether a given set is an MSTD set. Our construction begins with a set $A$ that is both an MSTD set and a $P_n$-set. We construct a family of $P_n$, MSTD sets by inserting elements into the middle in such a way that the new set is a $P_n$-set, and the number of added sums is equal to the number of added differences. Thus the new set is also an MSTD set.

In creating MSTD sets, it is very useful to know that we have a $P_n$-set. The reason is that we have all but the ``fringe'' possible sums and differences, and are thus reduced to studying the extreme sums and differences. The following lemma shows that if $A$ is a $P_n$, MSTD set and a certain extension of $A$ is a $P_n$-set, then this extension is also an MSTD set. The difficult step in our construction is determining a large class of extensions which lead to $P_n$-sets; we will do this in Lemma \ref{lem:mainconstr}.

\begin{lem}\label{lem:stability}
  Let  $A=L\cup R$ be a $P_n$-set where $L\subset[1,n]$ and $R\subset[n+1,2n]$. Form $A'=L \cup M \cup R'$ where $M\subset [n+1,n+m]$ and $R'=R+m$. If $A'$ is a $P_n$-set then $|A'+A'|-|A+A|=|A'-A'|-|A-A|=2m$ (i.e., the number of added sums is equal to the number of added differences). In particular, if $A$ is an MSTD set then so is $A'$.
\end{lem}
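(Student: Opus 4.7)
The plan is to use the $P_n$ property to reduce the comparison of $|A+A|$ with $|A'+A'|$ (and similarly for differences) to a comparison of fringes. Let $a=\min A$ and $b=\max A$; since $A$ is $P_n$, I decompose $A+A$ as the disjoint union of a \emph{middle} block $[2a+n,2b-n]$ (entirely contained in $A+A$ by hypothesis) together with a left fringe $F^-_A := (A+A)\cap[2a,2a+n-1]$ and a right fringe $F^+_A := (A+A)\cap[2b-n+1,2b]$. Thus
\[
|A+A| \ = \ (2b-2a-2n+1) + |F^-_A| + |F^+_A|.
\]
The analogous decomposition for $A'$ uses $\min A'=a$ and $\max A'=b+m$, so its middle has length $(2b-2a-2n+1)+2m$. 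Subtracting,
\[
|A'+A'|-|A+A| \ = \ 2m + \bigl(|F^-_{A'}|-|F^-_A|\bigr) + \bigl(|F^+_{A'}|-|F^+_A|\bigr),
\]
so the task reduces to showing both parenthesized differences vanish.

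For the left fringe, the key observation is that any sum $s=x+y\le 2a+n-1$ with $x\le y$ forces $y\le a+n-1$. In the regime where the lemma will be invoked ($1\in A$, so $a=1$) this reads $y\le n$; since elements of $R$, $M$, and $R'$ all exceed $n$, we are forced into $x,y\in L$ for both $A$ and $A'$. Consequently $F^-_A=F^-_{A'}=(L+L)\cap[2,n+1]$. A symmetric argument for the right fringe, using $2n\in A$ (so $b=2n$), shows that only $R+R$ contributes to $F^+_A$ and only $R'+R'=(R+R)+2m$ contributes to $F^+_{A'}$. These two fringes are translates of one another by $2m$, so $|F^+_{A'}|=|F^+_A|$, completing the sumset half.

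The difference side runs by the same template. Write $A-A$ as its middle $[-(b-a)+n,(b-a)-n]$ together with two fringes of width $n$; the middle of $A'-A'$ is $2m$ longer since $b_{A'}-a_{A'}=(b-a)+m$. The negative fringe of $A-A$ receives contributions only from $L-R$, and under the construction this passes to $L-R'=(L-R)-m$ inside $A'-A'$, a mere translate of the same cardinality; symmetrically the positive fringe comes from $R-L$ and translates to $R'-L=(R-L)+m$. Hence $|A'-A'|-|A-A|=2m$ as well, and the MSTD corollary is immediate: if $|A+A|>|A-A|$ then both quantities grow by the same $2m$, preserving the strict inequality.

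The substantive work is the bookkeeping that shows all the ``new'' contributions $L+M$, $M+M$, $M+R'$ (and the difference-set analogues $L-M$, $M-R'$, etc.) lie entirely inside the middle intervals, so they cannot perturb the fringe counts. This is where the extremal conditions $1\in L$ and $2n\in R$, implicit in the way Lemma~\ref{lem:stability} is used in Theorem~\ref{thm:mainconstruction}, become essential: without them a term such as $L+M$ could in principle spill into the left sumset fringe and spoil the equality.
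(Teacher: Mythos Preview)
Your approach is essentially identical to the paper's: decompose each of $A+A$, $A'+A'$, $A-A$, $A'-A'$ into a guaranteed-full middle interval of width controlled by the $P_n$ hypothesis, plus two fringes of width $n$; show that the corresponding fringes of $A$ and $A'$ are either equal (on the $L$-side) or translates of one another (on the $R$-side), so that the entire discrepancy of $2m$ comes from the longer middle. The paper carries this out with the concrete windows $[2,n+1]$, $[3n+1,4n]$, $[1-2n,-n]$, $[n,2n-1]$ rather than writing them in terms of $a=\min A$ and $b=\max A$ as you do, but the content is the same.

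Your closing remark is a genuine and useful observation: the argument, both yours and the paper's, tacitly needs $\min A=1$ and $\max A=2n$. The paper's sentence ``since we have accounted for the $n$ smallest and largest terms\dots and as both are $P_n$-sets, the number of added sums is $2m$'' only follows because with $a=1$, $b=2n$ the $P_n$ hypothesis fills exactly $[n+2,3n]$ (resp.\ $[-n+1,n-1]$); for general $a,b$ the guaranteed-full interval $[2a+n,2b-n]$ need not coincide with the complement of the fringe windows the paper uses. Since Lemma~\ref{lem:stability} is only ever invoked under the hypotheses of Theorem~\ref{thm:mainconstruction}, where $1,2n\in A$ is assumed, this is harmless for the paper, but you are right to flag it.
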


\begin{proof}
We first count the number of added sums. In the interval $[2,n+1]$ both $A+A$ and $A'+A'$ are identical, as any sum can come only from terms in $L+L$. Similarly, we can pair the sums of $A+A$ in the region $[3n+1,4n]$ with the sums of $A'+A'$ in the region $[3n+2m+1,4n+2m]$, as these can come only from $R+R$ and $(R+m)+(R+m)$ respectively. Since we have accounted for the $n$ smallest and largest terms in both $A+A$ and $A'+A'$, and as both are $P_n$-sets, the number of added sums is just $(3n+2m+1)-(3n+1)=2m$.

Similarly, differences in the interval $[1-2n, -n]$ that come from $L-R$ can be paired with the corresponding terms from $L-(R+m)$, and differences in the interval $[n, 2n-1]$ from $R-L$ can be paired with differences coming from $(R+m)-L$. Thus the size of the middle grows from the interval $[-n+1,n-1]$ to the interval $[-n-m+1,n+m-1]$. Thus we have added $(2n+2m+3)-(2n+3)=2m$ differences. Thus $|A'+A'|-|A+A|=|A'-A'|-|A-A|=2m$ as desired.
\end{proof}

The above lemma is not surprising, as in it we assume $A'$ is a $P_n$-set; the difficulty in our construction is showing that our new set $A(M;k)$ is also a $P_n$-set for suitably chosen $M$. This requirement forces us to introduce the sets $O_i$ (which are blocks of $k$ consecutive integers), as well as requiring $M$ to have at least one of every $k$ consecutive integers.

We are now ready to prove the first part of Theorem \ref{thm:mainconstruction} by constructing an infinite family of distinct $P_n$, MSTD sets. We take a $P_n$, MSTD set and insert a set in such a way that it remains a $P_n$-set; thus by Lemma \ref{lem:stability} we see that this new set is an MSTD set.

\begin{lem}\label{lem:mainconstr}
  Let  $A=L\cup R$ be a $P_n$-set where $L\subset[1,n]$, $R\subset[n+1,2n]$, and $1,2n\in A$.
  Fix a $k \ge n$ and let $m$ be arbitrary. Choose any $M\subset[n+k+1,n+k+m]$ with the property that $M$ does not have a run of more than $k$ missing elements, and form $A(M;k)=L \cup O_1 \cup M \cup O_2 \cup R'$ where $O_1=[n+1,n+k]$, $O_2=[n+k+m+1,n+2k+m]$, and $R'=R+2k+m$. Then $A(M;k)$ is a $P_n$-set.
\end{lem}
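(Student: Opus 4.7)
The plan is to verify directly that both inclusions defining a $P_n$-set hold for $A(M;k)$. Since $1 \in L$ and $2n+2k+m \in R'$, we have $\min A(M;k) = 1$ and $\max A(M;k) = 2n+2k+m$, so \eqref{eq:beingPnsetsum} and \eqref{eq:beingPnsetdiff} reduce to showing
\[
[n+2,\,3n+4k+2m] \subset A(M;k)+A(M;k) \quad \text{and} \quad [-(n+2k+m-1),\,n+2k+m-1] \subset A(M;k)-A(M;k).
\]
Throughout I would exploit that $O_1$ and $O_2$ are full intervals of length $k$, and that the no-gap condition on $M$ forces consecutive elements of $M$ to differ by at most $k$.

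For the sumset, the plan is to cover the ``bulk'' range $[2n+2,\,2n+4k+2m]$ as the union of five intervals. Three come from sums of full intervals: $O_1+O_1 = [2n+2,2n+2k]$, $O_1+O_2 = [2n+k+m+2,2n+3k+m]$, and $O_2+O_2 = [2n+2k+2m+2,2n+4k+2m]$. The two remaining sets $O_1+M$ and $M+O_2$ are themselves intervals, since the no-gap condition makes consecutive translates of $O_1$ (respectively $O_2$) by elements of $M$ overlap or abut; the bounds $\min M \geq n+k+2$ (from $n+k+1 \notin M$) and $\max M \geq n+m+1$ (obtained by applying the no-gap condition at $\ell = n+m+1$ when $m\geq k$) force the endpoints of these two intervals to fall where they are needed. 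Every adjacency inequality among the five intervals reduces to $k \geq n$, which is the hypothesis. The left fringe $[n+2,\,2n+1]$ is then covered by $1+O_1 = [n+2,n+k+1]$ (using $1 \in L$ and $k\geq n$), and the right fringe $[2n+4k+2m+1,\,3n+4k+2m]$ is covered symmetrically by $(2n+2k+m)+O_2 = [3n+3k+2m+1,3n+4k+2m]$.

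For the difference set, using $A(M;k)-A(M;k) = -(A(M;k)-A(M;k))$, it suffices to cover $[0,\,n+2k+m-1]$. I would apply the same strategy with $O_1$ in the role of the subtracted interval: $O_1-O_1 = [-(k-1),k-1]$ covers the near-zero range, $M-O_1$ fills in $[2,m]$ (this step is needed only when $m \geq k$; otherwise the no-gap condition is vacuous and the preceding interval already reaches past $m$), $O_2-O_1 = [m+1,\,2k+m-1]$ handles the next block, and $(2n+2k+m)-O_1 = [n+k+m,\,n+2k+m-1]$ reaches exactly the required upper endpoint. Every adjacency here again reduces to the inequality $k \geq n$.

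The main obstacle is the bookkeeping case split at $m < k$: then the no-gap condition on $M$ is vacuous and $M$ may even be empty, so I would check separately that in this regime $O_1+O_1$ and $O_1+O_2$ (respectively $O_1-O_1$ and $O_2-O_1$) already abut without needing $M$. In both regimes the proof is elementary interval arithmetic, with $k \geq n$ being precisely the inequality that makes every abutment work, and the hypotheses $1 \in L$, $2n \in R$, and $n+k+1 \notin M$ used exactly at the fringes and at the first boundary of $M$.
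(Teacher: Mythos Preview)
Your approach is correct and is essentially the same interval-covering argument as the paper's proof; the paper shortens the sumset verification by applying the symmetry $A' \mapsto (2n+2k+m+1)-A'$ to reduce to the lower half $[n+2,\,2n+2k+m]$, so it never needs your $M+O_2$ and $O_2+O_2$ pieces, but otherwise the two arguments match step for step (and your explicit treatment of the $m<k$ edge case is actually cleaner than the paper's implicit handling). One small correction: the condition $n+k+1\notin M$ is a hypothesis of Theorem~\ref{thm:mainconstruction}, not of Lemma~\ref{lem:mainconstr}, and is used there only to guarantee the constructed sets are distinct for different $k$; your argument does not actually need it, since a smaller $\min M$ only makes $O_1+M$ start earlier.
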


\begin{proof} For notational convenience, denote $A(M;k)$ by $A'$.
Note $A'+A' \subset [2, 4n+4k+2m]$. We begin by showing that there are no missing sums from $n+2$ to $3n+4k+2m$; proving an analogous statement for $A'-A'$ shows $A'$ is a $P_n$-set. By symmetry\footnote{Apply the arguments below to the set $2n+2k+m-A'$, noting that $1, 2n+2k+m \in A'$.} we only have to show that there are no missing sums in $[n+2, 2n+2k+m]$. We consider various ranges in turn.

We observe that $[n+2, n+k+1]\subset A'+A'$ because we have $1\in L$ and these sums result from $1+O_1$. Additionally, $O_1 + O_1=[2n+2, 2n+2k]\subset A'+A'$. Since $n\le k$ we have $n+k+1\ge 2n+1$, these two regions are contiguous and thus $[n+2, 2n+2k]\subset A'+A'$.

Now consider $O_1 + M$. Since $M$ does not have a run of more than $k$ missing elements, the worst case scenario (in terms of getting the required sums) is that the smallest element of $M$ is $n+2k$ and that the largest element is $n+m+1$ (and, of course, we still have at least one out of every $k$ consecutive integers is in $M$). If this is the case then we still have $O_1+M \supset [(n+1)+(n+2k), (n+k) + (n+m+1)]=[2n+2k+1, 2n+k+m+1]$. We had already shown that $A'+A'$ has all sums up to $2n+2k$; this extends the sumset to all sums up to $2n+k+m+1$.

All that remains is to show we have all sums in $[2n+k+m+2, 2n+2k+m]$. This follows immediately from $O_1+O_2=[2n+k+m+2,2n+3k+m]\subset A'+A'$. This extends our sumset to include all sums up to $2n+3k+m$, which is well past our halfway mark of $2n+2k+m$. Thus we have shown that $A'+A' \supset [n+2, 3n+4k+2m+1]$.\\

We now do a similar calculation for the difference set, which is contained in $[-(2n+2k+m)+1, (2n+2k+m)-1]$. As we have already analyzed the sumset, all that remains to prove $A$ is a $P_n$-set is to show that $A'-A' \supset [-n-2k-m+1,n+2k+m-1]$. As all difference sets\footnote{Unless, of course, $A$ is the empty set!} are symmetric about and contain $0$, it suffices to show the positive elements are present, i.e., that $A'-A' \supset [1,n+2k+m-1]$.

We easily see $[1,k-1] \subset A'-A'$ as $[0,k-1] \subset O_1 - O_1$. Now consider $M -  O_1$. Again the worst case scenario (for getting the required differences) is that the least element of $M$ is $n+2k$ and the greatest is $n+m+1$. With this in mind we see that $M - O_1 \supset [(n+2k)-(n+k) , (n+m+1)-(n+1)]=[k,m]$. Now $O_2 - O_1 \supset [(n+k+m+1)-(n+k), (n+2k+m)-(n+1)]=[m+1,2k+m-1]$, and we therefore have all differences up to $2k+m-1$.

Since $2n \in A$ we have $2n+2k+m \in A'$. Consider $(2n+2k+m) - O_1 = [n+k+m,n+2k+m-1]$. Since $k\ge n$ we see that $n+k+m \le 2k+m$; this implies that we have all differences up to $n+2k+m-1$ (this is because we already have all differences up to $2k+m-1$, and $n+k+m$ is either less than $2k+m-1$, or at most one larger).
\end{proof}

\begin{proof}{Proof of Theorem \ref{thm:mainconstruction}(1).}
The proof of the first part of Theorem \ref{thm:mainconstruction} follows immediately. By Lemma \ref{lem:mainconstr} our new sets $A(M;k)$ are $P_n$-sets, and by Lemma \ref{lem:stability} they are also MSTD. All that remains is to show that the sets are distinct; this is done by requiring $n+k+1$ is not in our set (for a fixed $k$, these sets have elements $n+1, \dots, n+k$ but not $n+k+1$; thus different $k$ yield distinct sets). \end{proof}


\section{Lower bounds for the percentage of MSTDs}\label{sec:lowerboundspercentage}

To finish the proof of Theorem \ref{thm:mainconstruction}, for a fixed $n$ we need to count how many sets $\widetilde{M}$ of the form $O_1 \cup M \cup O_2$ (see Theorem \ref{thm:mainconstruction} for a description of these sets) of width $r = 2k+m$ can be inserted into a $P_n$, MSTD set $A$ of width $2n$. As $O_1$ and $O_2$ are just intervals of $k$ consecutive ones, the flexibility in choosing them comes solely from the freedom to choose their length $k$ (so long as $k \ge n$). There is far more freedom to choose $M$.

There are two issues we must address. First, we must determine how many ways there are there to fill the elements of $M$ such that there are no runs of $k$ missing elements. Second, we must show that the sets generated by this method are distinct. We saw in the proof of Theorem \ref{thm:mainconstruction}(1) that the latter is easily handled by giving $A(M;k)$ (through our choice of $M$) slightly more structure. Assume that the element $n+k+1$ is \emph{not} in $M$ (and thus not in $A$). Then for a fixed width $r=2k+m$ each value of $k$ gives rise to necessarily distinct sets, since the set contains $[n+1, n+k]$ but not $n+k+1$. In our arguments below, we assume our initial $P_n$, MSTD set $A$ is fixed; we could easily increase the number of generated MSTD sets by varying $A$ over certain MSTD sets of size $2n$. We choose not to do this as $n$ is fixed, and thus varying over such $A$ will only change the percentages by a constant independent of $k$ and $m$.

Fix $n$ and let $r$ tend to infinity. We count how many $\widetilde{M}$'s there are of width $r$ such that in $M$ there is at least one element chosen in any consecutive block of $k$ integers. One way to ensure this is to divide $M$ into consecutive, non-overlapping blocks of size $k/2$, and choose at least one element in each block. There are $2^{k/2}$ subsets of a block of size $k/2$, and all but one have at least one element. Thus there are $2^{k/2} - 1 = 2^{k/2} (1 - 2^{-k/2})$ valid choices for each block of size $k/2$. As the width of $M$ is $r-2k$, there are $\lceil \frac{r-2k}{k/2}\rceil \le \frac{r}{k/2}-3$ blocks (the last block may have length less than $k/2$, in which case any configuration will suffice to ensure there is not a consecutive string of $k$ omitted elements in $M$ because there will be at least one element chosen in the previous block). We see that the number of valid $M$'s of width $r-2k$ is at least $2^{r-2k} \left(1 - 2^{-k/2}\right)^{\frac{r}{k/2}-3}$. As $O_1$ and $O_2$ are two sets of $k$ consecutive $1$'s, there is only one way to choose either.

We therefore see that, for a fixed $k$, of the $2^r = 2^{m+2k}$ possible subsets of $r$ consecutive integers, we have at least $2^{r-2k} \left(1 - 2^{-k/2}\right)^{\frac{r}{k/2}-3}$ are permissible to insert into $A$. To ensure that all of the sets are distinct, we require $n+k+1 \not\in M$; the effect of this is to eliminate one degree of freedom in choosing an element in the first block of $M$, and this will only change the proportionality constants in the percentage calculation (and \emph{not} the $r$ or $k$ dependencies). Thus if we vary $k$ from $n$ to $r/4$ (we could go a little higher, but once $k$ is as large as a constant times $r$ the number of generated sets of width $r$ is negligible) we have at least some fixed constant times $2^r \sum_{k=n}^{r/4} \frac{1}{2^{2k}} \left(1 - 2^{-k/2}\right)^{\frac{r}{k/2}-3}$ MSTD sets; equivalently, the percentage of sets $O_1 \cup M \cup O_2$ with $O_i$ of width $k \in \{n,\dots, r/4\}$ and $M$ of width $r-2k$ that we may add is at least this divided by $2^r$, or some universal constant times \be\label{eq:keycardsum} \sum_{k=n}^{r/4} \frac1{2^{2k}} \left(1 - \frac1{2^{k/2}}\right)^{\frac{r}{k/2}} \ee (as $k \ge n$ and $n$ is fixed, we may remove the $-3$ in the exponent by changing the universal constant).

We now determine the asymptotic behavior of this sum. More generally, we can consider sums of the form \be S(a,b,c;r) \ = \ \sum_{k=n}^{r/4} \frac1{2^{ak}} \left(1 - \frac1{2^{bk}}\right)^{r/ck}. \ee For our purposes we take $a=2$ and $b=c=1/2$; we consider this more general sum so that any improvements in our method can readily be translated into improvements in counting MSTD sets. While we know (from the work of Martin and O'Bryant \cite{MO}) that a positive percentage of such subsets are MSTD sets, our analysis of this sum yields slightly weaker results. The approach in \cite{MO} is probabilistic, obtained by fixing the fringes of our subsets to ensure certain sums and differences are in (or not in) the sum- and difference sets. While our approach also fixes the fringes, we have far more possible fringe choices than in \cite{MO} (though we do not exploit this). While we cannot prove a positive percentage of subsets are MSTD sets, our arguments are far more elementary.

The proof of Theorem \ref{thm:mainconstruction}(2) is clearly reduced to proving the following lemma, and then setting $a = 2$ and $b=c=1/2$.

\begin{lem}\label{lem:lowerupperbounds} Let \be S(a,b,c;r) \ = \ \sum_{k=n}^{r/4} \frac1{2^{ak}} \left(1 - \frac1{2^{bk}}\right)^{r/ck}. \ee Then for any $\epsilon > 0$ we have \be \frac{1}{r^{a/b}} \ \ll \ S(a,b,c;r) \ \ll \ \frac{(\log r)^{2a+\epsilon}}{r^{a/b}}. \ee \end{lem}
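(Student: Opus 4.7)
The key observation is that the two factors $2^{-ak}$ and $(1-2^{-bk})^{r/(ck)}$ balance at the critical index $k^{*}:=\lceil\tfrac{1}{b}\log_2 r\rceil$, at which $2^{bk^{*}}\asymp r$. For $k\ll k^{*}$ the second factor is exponentially small (since $r/(ck\cdot 2^{bk})$ is large), while for $k\gg k^{*}$ the first factor decays geometrically; the sum should therefore concentrate in a window of width $O(\log\log r)$ around $k^{*}$, and each term in that window contributes of order $1/r^{a/b}$ up to polylogarithmic factors.

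The lower bound will follow by retaining only the single term $k=k^{*}$. Because $r\le 2^{bk^{*}}<2^{b}r$, the geometric factor satisfies $1/2^{ak^{*}}\ge 2^{-a}/r^{a/b}$. Writing $x:=2^{-bk^{*}}\le 1/r$ and $N:=r/(ck^{*})=O(r/\log r)$, the elementary inequality $(1-x)^N\ge 1-Nx$ yields $(1-2^{-bk^{*}})^{r/(ck^{*})}\ge 1-O(1/\log r)\ge 1/2$ for $r$ sufficiently large, so $S(a,b,c;r)\ge 2^{-a-1}/r^{a/b}$.

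For the upper bound, fix $\epsilon>0$, set $t_0:=\lceil\tfrac{1+\epsilon}{b}\log_2\log r\rceil$, and split the sum at $k^{*}\pm t_0$. On the low range $n\le k\le k^{*}-t_0$ we have $2^{-bk}\ge 2^{bt_0}/r\ge (\log r)^{1+\epsilon}/r$, so $(1-x)^N\le e^{-Nx}$ bounds the exponential factor by $\exp(-(\log r)^{1+\epsilon}/(ck))$; since $k=O(\log r)$, this is $\exp(-\Omega((\log r)^{\epsilon}))$, and multiplying by the bounded geometric sum $\sum 2^{-ak}$ yields a super-polynomially small contribution. On the high range $k\ge k^{*}+t_0$ we bound the exponential factor by $1$ and collect the geometric tail, giving $\sum_{k\ge k^{*}+t_0}2^{-ak}=O(2^{-a(k^{*}+t_0)})=O(r^{-a/b}(\log r)^{-a(1+\epsilon)/b})$, which is even smaller than the target. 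On the central window $|k-k^{*}|\le t_0$ we again bound the exponential factor by $1$ and estimate $\sum_{|k-k^{*}|\le t_0}2^{-ak}\le 2\cdot 2^{at_0-ak^{*}}=2(\log r)^{a(1+\epsilon)/b}/r^{a/b}$; in the intended regime $b=1/2$ this is $2(\log r)^{2a(1+\epsilon)}/r^{a/b}$, which after renaming $\epsilon$ by $\epsilon/(2a+1)$ gives the claimed $(\log r)^{2a+\epsilon}/r^{a/b}$.

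The delicate step is calibrating $t_0$: it must grow fast enough (at least like $\log_2\log r$) that the low range is killed by its super-polynomial decay, but any larger growth would inflate the central-window estimate beyond the target. The choice $t_0\asymp\log_2\log r$ is essentially forced, and the fact that this produces the precise exponent $(\log r)^{2a+\epsilon}$ relies on the equality $a/b=2a$ holding when $b=1/2$, which is exactly the regime needed to recover Theorem \ref{thm:mainconstruction}(2) from the lemma.
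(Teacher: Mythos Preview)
Your lower bound is correct and mirrors the paper's: you keep a single term at $k^{*}=\lceil\tfrac1b\log_2 r\rceil$, while the paper sums the geometric tail from $k\ge k^{*}$; either way the exponential factor is $\ge 1/2$ there and the bound $\gg r^{-a/b}$ follows.

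The upper bound, however, contains a genuine gap. On the low range $k\le k^{*}-t_0$ with $t_0=\lceil\tfrac{1+\epsilon}{b}\log_2\log r\rceil$ you correctly obtain
\[
\Bigl(1-2^{-bk}\Bigr)^{r/(ck)}\ \le\ \exp\!\Bigl(-\tfrac{(\log r)^{1+\epsilon}}{ck}\Bigr)\ \le\ \exp\!\bigl(-c'(\log r)^{\epsilon}\bigr),
\]
but the conclusion that this is ``super-polynomially small'' is false for $0<\epsilon<1$: one has $\exp(-c'(\log r)^{\epsilon})=r^{-c'(\log r)^{\epsilon-1}}$, and since $(\log r)^{\epsilon-1}\to 0$ this quantity is \emph{larger} than $r^{-\alpha}$ for every fixed $\alpha>0$. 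In particular it is not $O(r^{-a/b})$, so the low-range contribution swamps the target bound.

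The paper avoids this by splitting only into two ranges at the threshold $bk=\log_2 r-\delta\log_2\log r$ with $\delta=2+\epsilon/a>2$. The crucial point is that the exponent $\delta$ must exceed $2$ (not $1$): then the low range is bounded by $\exp(-c'(\log r)^{\delta-1})$ with $\delta-1>1$, which \emph{is} $\ll r^{-A}$ for every $A$. The price is that the complementary range now starts at $k^{*}-\tfrac{\delta}{b}\log_2\log r$, so the leading geometric term is $2^{-a(k^{*}-\delta b^{-1}\log_2\log r)}\asymp (\log r)^{a\delta/b}/r^{a/b}$; with $\delta=2+\epsilon/a$ this produces the stated exponent. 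Your three-range split is harmless (the high range is indeed negligible), but the coefficient $1+\epsilon$ in $t_0$ must be replaced by some $\delta>2$, and your central-window estimate must be recomputed accordingly.
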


\begin{proof} We constantly use $(1 - 1/x)^x$ is an increasing function in $x$. We first prove the lower bound. For $k \ge (\log_2 r)/b$ and $r$ large, we have \be \left(1 - \frac1{2^{bk}}\right)^{r/ck} \ = \ \left(1 - \frac1{2^{bk}}\right)^{2^{bk} \frac{r}{ck 2^{bk}}} \ \ge \ \left(1 - \frac{1}{r}\right)^{r \cdot \frac{b}{c \log_2 r}} \ \ge \ \foh \ee (in fact, for $r$ large the last bound is almost exactly 1). Thus we trivially have \bea S(a,b,c;r) & \ \ge \ & \sum_{k = (\log_2 r)/b}^{r/4} \frac1{2^{ak}} \cdot \foh \ \gg \ \frac1{r^{a/b}}. \eea

For the upper bound, we divide the $k$-sum into two ranges: (1) $bn \le bk \le \log_2 r - \log_2 (\log r)^\delta$; (2) $\log_2 r - \log_2(\log r)^\delta \le bk \le br/4$. In the first range, we have \begin{eqnarray} \left(1 - \frac1{2^{bk}}\right)^{r/ck} & \ \le \ & \left(1 - \frac{(\log r)^\delta}{r}\right)^{r/ck}\nonumber\\ & \ \ll \ & \exp\left(-\frac{b(\log r)^\delta}{c \log_2 r}\right) \nonumber\\ & \ \le \ & \exp\left(-\frac{b\log 2}{c} \cdot (\log r)^{\delta -1}\right). \end{eqnarray} If $\delta > 2$ then this factor is dominated by $r^{-\frac{b\log 2}{c} \cdot (\log r)^{\delta - 2}} \ll r^{-A}$ for any $A$ for $r$ sufficiently large. Thus there is negligible contribution from $k$ in range (1) if we take $\delta = 2 + \epsilon/a$ for any $\epsilon > 0$.

For $k$ in the second range, we trivially bound the factors $\left(1 - 1/2^{bk}\right)^{r/ck}$ by 1. We are left with \bea \sum_{k \ge \frac{\log_2 r}b - \frac{\log_2(\log r)^\delta}{b}} \ \frac1{2^{ak}} \cdot 1 \ \le \ \frac{(\log r)^{a \delta}}{r^{a/b}} \sum_{\ell = 0}^\infty \frac1{2^{a\ell}} \ \ll \ \frac{(\log r)^{a \delta}}{r^{a/b}}. \eea Combining the bounds for the two ranges with $\delta = 2 + \epsilon/a$ completes the proof.
\end{proof}

\begin{rek} The upper and lower bounds in Lemma \ref{lem:lowerupperbounds} are quite close, differing by a few powers of $\log r$. The true value will be at least $\left(\frac{\log r}{r}\right)^{a/b}$; we sketch the proof in Appendix \ref{sec:sizeSabcm}. \end{rek}

\begin{rek} We could attempt to increase our lower bound for the percentage of subsets that are MSTD sets by summing $r$ from $R_0$ to $R$ (as we have fixed $r$ above, we are only counting MSTD sets of width $2n+r$ where $1$ and $2n+r$ are in the set. Unfortunately, at best we can change the universal constant; our bound will still be of the order $1/R^4$. To see this, note the number of such MSTD sets is at least a constant times $\sum_{r=R_0}^R 2^r / r^4$ (to get the percentage, we divide this by $2^R$). If $r \le R/2$ then there are exponentially few sets. If $r \ge R/2$ then $r^{-4} \in [1/R^4, 16/R^4]$. Thus the percentage of such subsets is still only at least of order $1/R^4$.
\end{rek}


\section{Generalizing our construction}\label{sec:generalizingconstr}

Instead of searching for $A$ such that $|A+A| > |A-A|$, we now consider the more general problem\footnote{We do not consider the most general problem of comparing arbitrary combinations of $A$, contenting ourselves to this special case; see \cite{HM} for some thoughts about such generalizations.} of when \be \left|\gep_1 A + \cdots + \gep_n A\right| \ > \ \left|\widetilde{\gep}_1 A + \cdots + \widetilde{\gep}_n A\right|, \ \ \ \gep_i, \widetilde{\gep}_i \in \{-1,1\}. \ee

Consider the generalized sumset \be f_{j_1,\ j_2}(A)\ =\ A+A+\cdots+A-A-A-\cdots-A,\ee where there are $j_1$ pluses\footnote{By a slight abuse of notation, we say there are two sums in $A+A-A$, as is clear when we write it as $\epsilon_1 A + \epsilon_2 A + \epsilon_3 A$.} and $j_2$ minuses, and set $j=j_1 + j_2$. Our notion of a $P_n$-set generalizes, and we find that if there exists one set $A$ with $|f_{j_1,\ j_2}(A)| > |f_{j_1',\ j_2'}(A)|$, then we can construct infinitely many such $A$.
Note without loss of generality that we may assume $j_1 \ge j_2$.\footnote{This follows as we are only interested in $|f_{j_1,\ j_2}(A)|$, which equals $|f_{j_2,\ j_1}(A)|$. This is because $B$ and $-B$ have the same cardinality, and thus (for example) we see $A+A-A$ and $-(A-A-A)$ have the same cardinality.}

\begin{defi}[$P_n^j$-set.] Let $A \subset [1, k]$ with $1, k, \in A$. We say $A$ is a $P^j_n$-set if any $f_{j_1,\ j_2}(A)$ contains all but the first $n$ and last $n$ possible elements. \end{defi}

\begin{rek} Note that a $P_n^2$-set is the same as what we called a $P_n$-set earlier. \end{rek}

We expect the following generalization of Theorem \ref{thm:mainconstruction} to hold.

\begin{conj}\label{conj:genconstr} For any $f_{j_1,\ j_2}$ and $f_{j_1',\ j_2'}$, if there exists a finite set of integers $A$ which is (1) a $P^j_n$-set; (2) $A \subset [1, 2n]$ and $1, 2n \in A$; and (3) $|f_{j_1,\ j_2}(A)|>|f_{j_1',\ j_2'}(A)|$, then there exists an infinite family of such sets. \end{conj}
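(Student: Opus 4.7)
The plan is to mimic Theorem \ref{thm:mainconstruction} with the same construction $A(M;k) = L \cup O_1 \cup M \cup O_2 \cup R'$, replacing the sumset/difference set analysis by an analysis of all generalized sumsets $f_{j_1, j_2}$ with $j_1 + j_2 = j$. The two ingredients are the natural analogues of Lemmas \ref{lem:stability} and \ref{lem:mainconstr}: (i) if both $A$ and $A(M;k)$ are $P^j_n$-sets, then $|f_{j_1, j_2}(A(M;k))| - |f_{j_1, j_2}(A)|$ depends only on $j$ (and on $k, m$), not on the particular split $(j_1, j_2)$; (ii) the set $A(M;k)$ is a $P^j_n$-set, for $k$ sufficiently large (depending on $j$) and $M$ satisfying a no-long-gap condition that strengthens the one used when $j = 2$.

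Step (i) generalizes Lemma \ref{lem:stability} and should be straightforward. Because $1, 2n \in A$ and $1, 2n+2k+m \in A(M;k)$, the ambient interval
\[ [\,j_1 - (2n+2k+m)j_2,\; (2n+2k+m)j_1 - j_2\,] \]
for $f_{j_1, j_2}(A(M;k))$ is exactly $j(2k+m)$ longer than that for $f_{j_1, j_2}(A)$. Both sets being $P^j_n$ means all missing elements lie in the fringe windows of length $n$ at each end; matching the fringes of $A$ (coming solely from $L$ and $R$) with those of $A(M;k)$ (coming solely from $L$ and $R'$) by a pairing argument as in Lemma \ref{lem:stability} shows that the number of missing fringe elements is the same on each side. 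Hence $|f_{j_1, j_2}(A(M;k))| - |f_{j_1, j_2}(A)| = j(2k+m)$, depending only on $j$, and subtracting the analogous identity for $(j_1', j_2')$ preserves the strict inequality.

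Step (ii) is the main obstacle. For $j=2$ the proof of Lemma \ref{lem:mainconstr} exploited that $O_1 \pm O_1$ is an interval of length at least $k$, that $1 \in L$ allows $1 + O_1$ to extend coverage down, that the no-run-of-$k$ condition on $M$ lets $O_1 + M$ bridge to $O_2 + O_1$, and that a simple symmetry handles the right half. For general $j$ one must examine every sign pattern $(\epsilon_1, \ldots, \epsilon_j) \in \{-1,1\}^j$ and every assignment of the summands to the blocks $L, O_1, M, O_2, R'$. The key observation is that a partial combination of the form $\ell_1 O_1 - \ell_2 O_1$ is already an interval of length $(\ell_1 + \ell_2)(k-1)+1$, so once many coordinates are forced into the $O_i$ a long interval is automatic; varying one further coordinate through $M$ (using a sharpened no-long-gap condition, likely ``no run of more than $\lfloor k/(j-1) \rfloor$ missing elements'') and the remaining coordinates through $L \cup R'$ should splice these intervals together to cover the middle of $f_{j_1, j_2}(A(M;k))$. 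I expect the proof to require $k \ge c_j n$ with $c_j$ growing in $j$, and the hardest bookkeeping will be verifying coverage at the transition between the fringe region (governed by the structure of $A$) and the interior (governed by $O_1, M, O_2$), handling each split $(j_1, j_2)$ uniformly.

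Once (i) and (ii) are in place, infinitely many distinct sets arise by varying $M$ for any fixed admissible $k$; distinctness across different $k$ is enforced exactly as in Theorem \ref{thm:mainconstruction}(1) by requiring $n+k+1 \notin M$, yielding the infinite family claimed by the conjecture.
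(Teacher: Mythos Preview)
Your Step~(i) is exactly the paper's Lemma~\ref{lem:gen1}, and the fringe-matching argument you outline is the one given there.

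Your Step~(ii), however, takes a much harder road than the paper. You propose to analyse every sign pattern $(\gep_1,\dots,\gep_j)$ and every assignment of the $j$ summands to the blocks $L, O_1, M, O_2, R'$ directly, anticipating a sharpened no-gap condition on $M$ (runs of length at most about $k/(j-1)$) and a threshold $k \ge c_j n$ growing with $j$. The paper bypasses all of this with a single reduction, Lemma~\ref{lem:gen2}: any $P_n^2$-set contained in $[1,\ell]$ with $1,\ell$ in the set and $\ell\ge 2n$ is automatically a $P_n^j$-set for every $j\ge 3$. Since Lemma~\ref{lem:mainconstr} already shows that $A(M;k)$ is a $P_n^2$-set under the \emph{original} constraints $k\ge n$ and ``no run of more than $k$ missing elements in $M$'', Lemma~\ref{lem:gen2} instantly upgrades this to $P_n^j$. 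No strengthening of the gap condition, no $j$-dependent lower bound on $k$, and none of the transition-region bookkeeping you flag as ``the hardest'' part is needed: the same family that works for $j=2$ works verbatim for every $j$.

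The paper's route buys uniformity and economy: one short lemma (proved by fixing $j-2$ of the summands to lie in the two-point set $\{1,\ell\}$ and letting the remaining two range over $A+A$) handles all $j$ at once. Your direct route, if it could be pushed through, would be self-contained and would not pass through the $j=2$ case, but at the cost of $j$-dependent hypotheses and substantial casework that Lemma~\ref{lem:gen2} shows to be avoidable. Indeed, with Lemmas~\ref{lem:gen1} and~\ref{lem:gen2} in hand the paper regards the conditional content of the conjecture as settled; it is left as a conjecture only because, for $j\ge 4$, no starting set $A$ satisfying the hypotheses has yet been exhibited (see the discussion preceding Theorem~\ref{thm:genconstr}).
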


The difficulty in proving the above conjecture is that we need to find a set $A$ satisfying $|f_{j_1,\ j_2}(A)|>|f_{j_1',\ j_2'}(A)|$; once we find such a set, we can mirror the construction from Theorem \ref{thm:mainconstruction}. Currently we can only find such $A$ for $j \in \{2,3\}$:

\begin{thm}\label{thm:genconstr} Conjecture \ref{conj:genconstr} is true for $j \in \{2,3\}$. \end{thm}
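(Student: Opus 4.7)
The plan is to adapt the two-lemma structure underlying Theorem~\ref{thm:mainconstruction} so that it works uniformly for every pair $(j_1, j_2)$ with $j_1 + j_2 = j$, and then to exhibit concrete base sets for the remaining cases. First, I would prove a generalized stability lemma: if $A = L \cup R$ is a $P^j_n$-set with $L \subset [1,n]$, $R \subset [n+1,2n]$, and $A' = L \cup M \cup R'$ with $M \subset [n+1, n+m]$ and $R' = R + m$ is also a $P^j_n$-set, then $|f_{j_1, j_2}(A')| - |f_{j_1, j_2}(A)| = jm$ for every $(j_1, j_2)$ with $j_1 + j_2 = j$. The proof directly generalizes Lemma~\ref{lem:stability}: the $n$ smallest values of $f_{j_1, j_2}(A)$ arise from taking small elements of $L$ for the $j_1$ summands and large elements of $R$ for the $j_2$ subtracted terms, so they shift by exactly $-j_2 m$ when $R$ is replaced by $R' = R + m$; analogously the top fringe shifts by $+j_1 m$; and by the $P^j_n$-hypothesis the entire middle is present in both sets, growing by $(j_1 + j_2)m = jm$. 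Since the increment depends only on $j$, every strict inequality $|f_{j_1, j_2}(A)| > |f_{j'_1, j'_2}(A)|$ is inherited by $A'$.

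Second, I would prove a generalized construction lemma analogous to Lemma~\ref{lem:mainconstr}: for a $P^j_n$-set $A$ with $1, 2n \in A$, a sufficiently large $k$ (linear in $jn$), and any $M \subset [n+k+1, n+k+m]$ avoiding gaps longer than $k$, the set $A(M;k)$ is itself a $P^j_n$-set. The verification for each $(j_1, j_2)$ parallels the sumset/difference-set argument in the proof of Lemma~\ref{lem:mainconstr}: $j$-fold sums and differences of the long intervals $O_1$ and $O_2$ are themselves long intervals, the gap hypothesis on $M$ lets one bridge these intervals through $M$, and the endpoints $1, 2n+2k+m \in A(M;k)$ let one push coverage all the way to the fringes on both sides. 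Together with the generalized stability lemma, this reduces Conjecture~\ref{conj:genconstr} for a given comparison to exhibiting a single base $P^j_n$-set in $[1, 2n]$ containing $1$ and $2n$ and realizing the desired strict inequality.

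For $j = 2$ the two non-trivial directions are already settled: the MSTD direction $|A+A| > |A-A|$ is witnessed by the set in Remark~\ref{rek:thmisnontrivial}, and the difference-dominant direction $|A-A| > |A+A|$ is witnessed by almost any generic set (e.g.\ a suitably perturbed arithmetic progression, whose $P_n$ property can be checked directly). For $j = 3$, since we assume $j_1 \ge j_2$, the only non-trivial comparison is between $|f_{3,0}(A)| = |A+A+A|$ and $|f_{2,1}(A)| = |A+A-A|$. The direction $|A+A-A| > |A+A+A|$ is the ``generic'' one and is easily witnessed by explicit small example. The main obstacle, and the step I expect to be delicate, is the sum-dominant direction $|A+A+A| > |A+A-A|$, since such sets are rare; I would locate a base set by a short computer search over candidates in $[1, 2n]$ for modest $n$, using the classical symmetric-core-plus-fringe-perturbation template, verify the $P^3_n$ property directly on the candidate, and then invoke the two generalized lemmas to produce the required infinite family.
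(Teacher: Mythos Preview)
Your proposal is correct in outline and would prove the theorem, but you take a different decomposition than the paper. You propose to redo Lemma~\ref{lem:mainconstr} for every pair $(j_1,j_2)$ with $j_1+j_2=j$, verifying directly that $A(M;k)$ is a $P_n^j$-set via $j$-fold interval-covering arguments (and you anticipate needing $k$ linear in $jn$). The paper instead proves a single structural lemma (Lemma~\ref{lem:gen2}): any $P_n^2$-set contained in $[1,k]$ with $k\ge 2n$ is automatically a $P_n^j$-set for all $j\ge 3$. This lets the paper reuse Lemma~\ref{lem:mainconstr} unchanged---since $A(M;k)$ is already known to be a $P_n^2$-set, Lemma~\ref{lem:gen2} promotes it to a $P_n^j$-set for free, and the original threshold $k\ge n$ suffices independently of $j$. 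Your generalized stability lemma and your handling of the base sets (including the computer search for the $|A{+}A{+}A|>|A{+}A{-}A|$ direction) match the paper's Lemma~\ref{lem:gen1} and its exhibited example. The paper's route is shorter and yields a $j$-uniform construction; yours is more hands-on and would work, but with more bookkeeping and a possibly $j$-dependent threshold on $k$.
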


As the proof is similar to that of Theorem \ref{thm:mainconstruction}, we just highlight the changes. We prove the lemmas below in greater generality than we need for our theorem as this generality is needed to attack Conjecture \ref{conj:genconstr}. The first step is an analogue of Lemma \ref{lem:stability}, the second is proving that a $P_n^2$-set is also a $P_n^j$-set, and the third is constructing sets $A$ (when $j = 3$) to start the construction.

\begin{lem}\label{lem:gen1} Let $A=L \cup R$ be a $P_n^j$-set, where $L \subset [1, n], R \subset [n+1, 2n]$. Form $A' =L\cup M\cup R',$ where $M \subset [n+1, n+m]$ and $R' = R+m$.  If $A'$ is a $P_n^j$-set, then $|f_{j_1,\ j_2}(A')| -|f_{j_1,\ j_2}(A)| = |f_{j_1',\ j_2'}(A')| -|f_{j_1',\ j_2'}(A)|.$  Thus if $|f_{j_1,\ j_2}(A)|>|f_{j_1',\ j_2'}(A)|$, the same is true for $A'$. \end{lem}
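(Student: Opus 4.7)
The plan is to adapt the proof of Lemma~\ref{lem:stability} by replacing the two fringe regions (sum and difference) with the generalized fringes of $f_{j_1, j_2}$. Since both $A$ and $A'$ are $P_n^j$-sets, each of $f_{j_1, j_2}(A)$ and $f_{j_1, j_2}(A')$ decomposes into a fully-present middle interval plus at most $n$ elements at each end. My goal is to show that $|f_{j_1, j_2}(A')| - |f_{j_1, j_2}(A)|$ equals $jm$ independent of the particular split $(j_1, j_2)$; subtracting the analogous identity for $(j_1', j_2')$ will then yield the lemma.

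The middle contribution is immediate. The extreme possible values of $f_{j_1,j_2}(A)$ are $\mu(A) = j_1 - 2nj_2$ and $M(A) = 2nj_1 - j_2$, while $\mu(A') = \mu(A) - j_2 m$ and $M(A') = M(A) + j_1 m$. The $P_n^j$ hypothesis guarantees $[\mu(\cdot)+n,\ M(\cdot)-n] \subset f_{j_1,j_2}(\cdot)$, so these middle blocks differ in length by exactly $j_1 m + j_2 m = jm$.

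The hard part will be showing that the bottom and top fringes have the same cardinality for $A$ and $A'$. The key sub-claim is that every representation $x = \sum_{i=1}^{j_1} a_i - \sum_{k=1}^{j_2} b_k$ of a bottom-fringe element of $f_{j_1,j_2}(A')$ (i.e.\ with $x \le \mu(A')+n-1$) must use $a_i \in L$ and $b_k \in R'$. Indeed, combining $x \le \mu(A')+n-1$ with $b_k \le 2n+m$ yields $\sum a_i \le j_1 + n - 1$; since each $a_i \ge 1$, this forces $a_i \le n$ for every $i$, and the decomposition $A' = L \cup M \cup R'$ with $M \subset [n+1, n+m]$ and $R' \subset [n+m+1, 2n+m]$ places each $a_i$ in $L$. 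A symmetric lower bound on $\sum b_k$ gives $b_k \ge n+m+1$, hence $b_k \in R'$. The identical argument applies to $A$ with $R$ replacing $R'$.

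With this structural fact in hand, the map $x \mapsto x - j_2 m$, realized on representations by $b_k \mapsto b_k + m$, sends the bottom fringe of $f_{j_1,j_2}(A)$ bijectively onto that of $f_{j_1,j_2}(A')$; an analogous shift by $+j_1 m$ handles the top fringe. Adding the middle increment of $jm$ to the two size-preserving fringe bijections gives $|f_{j_1,j_2}(A')| - |f_{j_1,j_2}(A)| = jm$. Since this increment depends only on $j = j_1 + j_2$, the same identity holds for any $(j_1', j_2')$ with $j_1' + j_2' = j$, and subtracting the two yields the equality asserted in the lemma. In particular the strict inequality is preserved, giving the ``MSTD-style'' conclusion.
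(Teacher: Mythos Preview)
Your proof is correct and follows essentially the same route as the paper: both argue that the middle interval grows by exactly $jm$ while the two fringes of $f_{j_1,j_2}(A)$ and $f_{j_1,j_2}(A')$ are in bijection via translation, since bottom-fringe representations must come from $L+\cdots+L-R-\cdots-R$ (respectively $L+\cdots+L-R'-\cdots-R'$). You are in fact more explicit than the paper, which simply asserts this structural fact without the inequality argument you supply.
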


\begin{proof} Since $A\subset [1, 2n]$ and is a $P^j_n$-set, we know $f(A) \subset [j_1-2nj_2, 2nj_1-j_2]$ and $[j_1 -2nj_2+n, 2nj_1-j_2 -n] \subset f(A)$. Note any elements in $f(A) \cap [j_1 -2nj_2, j_1-2nj_2+n-1]$ can only come from $L+L+L+ \cdots+L-R-R-R-\cdots-R$.

As $A' \subset [1, 2n+m]$, $f(A') \subset [j_1-(2n+m)j_2 , (2n+m)j_1-j_2]$ and $[j_1 -(2n+m)j_2+n, (2n+m)j_1-j_2 -n] \subset f(A)]$. Any elements in $f(A) \cap [j_1 -(2n+m)j_2, j_1-(2n+m)j_2+n-1]$ can only come only from $L+L+L+ \cdots+L-R'-R'-R'-\cdots-R'$, which is simply a translation of $L+L+L+ \cdots+L-R-R-R-\cdots-R$.

A similar argument works for the right fringe of $f_{j_1,\ j_2}(A')$. Thus $|f(A')| = |f(A)| +jm$ (this is because the potential width of $f_{j_1,\ j_2}(A')$ is $jm$ more than that of $f_{j_1,\ j_2}(A)$, and the two fringes of these sets are in a 1-1 correspondence). Since $|f_{j_1,\ j_2}(A')| - |f_{j_1,\ j_2}(A)|$ depends only on $j=j_1+j_2$, it holds for any pair of forms with $j$ coefficients, and the lemma is proven. \end{proof}


\begin{lem}\label{lem:gen2} For $j \ge 3$, any $P_n^2$-set is also a $P^j_n$-set. \end{lem}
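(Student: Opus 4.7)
The plan is to induct on $j \ge 3$, showing that the $P_n^2$ hypothesis propagates one step at a time to give $P_n^j$. Fix an arbitrary pair $(j_1,j_2)$ with $j_1+j_2=j$. Since $f_{j_2,j_1}(A) = -f_{j_1,j_2}(A)$, reflection about $0$ converts the desired fringe condition for one pair into the condition for its swap, so without loss of generality I may assume $j_1 \ge 1$.

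The heart of the argument is the inclusion
\[
f_{j_1,j_2}(A) \ \supset \ B + A, \qquad B \ := \ f_{j_1-1,\ j_2}(A).
\]
Here $B$ is a signed sumset with $j-1$ summands, and the fringe property I need for $B$ is precisely $P_n^{j-1}$ --- which is our hypothesis $P_n^2$ in the base case $j=3$, and the inductive hypothesis for $j\ge 4$. Setting $u := (j_1-1) - 2nj_2$ and $v := 2n(j_1-1) - j_2$, this gives $B\subset[u,v]$ with $[u+n,\ v-n]\subset B$.

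Because $1,2n\in A$, I have $B+A \supset (B+1)\cup(B+2n)$. The translate $B+1$ covers $[u+n+1,\ v-n+1]$ and $B+2n$ covers $[u+3n,\ v+n]$. The target interior that the $P_n^j$ condition requires for $f_{j_1,j_2}(A)$ is $[j_1-2nj_2+n,\ 2nj_1-j_2-n]$, which simplifies to exactly $[u+n+1,\ v+n]$. Thus the inductive step reduces to checking that the two translates meet with no gap between them, i.e., $u+3n \le v-n+2$, equivalently $v-u \ge 4n-2$.

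The entire induction then turns on a one-line identity: $v-u = (j-1)(2n-1)$, which satisfies $v-u \ge 4n-2$ precisely when $j \ge 3$ --- exactly the regime of the lemma. The main obstacle, such as it is, is bookkeeping: verifying that the endpoints of the translates $B+1$ and $B+2n$ line up with the correct target interior for the next level. Once those endpoints are pinned down, no further inequalities arise, the single arithmetic bound $v-u \ge 4n-2$ drives the whole induction, and the exceptional nature of $j=2$ (where $v-u = 2n-1 < 4n-2$) explains why the lemma is stated only for $j \ge 3$.
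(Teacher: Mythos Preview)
Your argument is correct and takes a genuinely different route from the paper's. The paper peels off \emph{two} copies of $A$ at once, writing $f_{j_1,j_2}(A) \supset f_{j_1-2,\,j_2}(\{1,k\}) + (A+A)$ and invoking the $P_n^2$ hypothesis directly on $A+A$; it then shows that $j-1$ explicit translates of $[n+2,\,2k-n]$ cover the target interval, which forces a small case analysis (the exceptional case $i=j_1-1$, $j_1=j_2$). You instead induct on $j$, peel off a single copy of $A$, and use only the two translates by $1$ and by $\max A$; the inductive hypothesis carries the load, and the single inequality $v-u\ge 4n-2$ finishes every step. Your approach is cleaner and avoids the case split entirely; the paper's approach buys a one-shot (non-inductive) argument at the cost of that split.

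One small point: you have implicitly taken $\max A = 2n$, whereas the lemma (and its use on the extended sets $A(M;k)\subset[1,\,2n+2k+m]$ in the proof of Theorem~\ref{thm:genconstr}) requires the general case $\max A = K$. Replacing $2n$ by $K$ throughout your computation, the overlap condition becomes $(K-1)(j-1)\ge K+2n-2$, which for $j\ge 3$ is equivalent to $K\ge 2n$ --- exactly the side hypothesis the paper also imposes in its proof (and which the remark following the lemma shows cannot be dropped). With that adjustment your induction goes through unchanged.
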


\begin{proof} Let A be a $P_n^2$-set, where $A \subset [1, k]$ and
$1, k \in A$.  Assume $k \geq 2n$. Then $A+A \cap [n+2, 2k-n] = [n+2, 2k-n]$ (as $A$ is a $P_n^2$-set).

Let $f_{j_1,\ j_2}$ be a form with $j \ge 3$, and thus either $j_1$ or $j_2$ is at least 2; without loss of generality we assume $j_1 \ge 2$. There is a form $f_{j_1-2,\ j_2}$ such that $f_{j_1-2,\ j_2}(A) +A+A = f_{j_1,\ j_2}(A)$.  The proof follows by showing $f_{j_1-2,\ j_2}(\{1, k\})+A+A$ contains all necessary elements, namely $[j_1-kj_2 +n, j_1k - j_2 -n]$. (By $f_{j_1-2,\ j_2}(\{1, k\})$ we mean all numbers of the form $\gep_1 a_1 + \cdots + \gep_{j-2} a_{j-2}$, with the $\gep_i$ the coefficients of the form $f_{j_1-2,j_2}$ and $a_i \in \{1,k\}$.) We have \begin{equation} f_{j_1-2,\ j_2}(\{1, k\})\ \supset \ \{j_1 - 2 -i +k(i-j_2) \;|\; 0\leq i \leq j -2\}. \end{equation} To see this, we first consider $i \le j_1-2$. For such $i$, for the positive summands choose $1$ a total of $j_1-2-i$ times and $k$ a total of $i$ times, while for the negative summands we choose $k$ each of the $j_2$ times. If now $j_1 - 2 < i \le j-2$, for the positive summands we choose $k$ a total of $i-j_2$ times (which is permissible as this is at most $j_1-2$) and we choose 1 the remaining $j_1-2-(i-j_2)$ times, while for the negative summands we choose $1$ all $j_2$ times. This leads to a sum of $k\cdot (i-j_2) + 1\cdot(j_1-2+j_2-i)-1\cdot j_2$, which equals $j_1-2-i+ k(i-j_2)$ as claimed. Unfortunately, this argument fails if $i=j_1-1$ and $j_1=j_2$, as we would then be choosing $k$ from the positive summands negative one times.\footnote{This is the only bad case we need consider, as we know $j_1 \ge j_2$, and the only problem arises when $i-j_2 < 0$.} We are thus left with showing that we may obtain the sum $-1-k$ in this special case. As $j_1=j_2$, we just choose $1$ for the $j_1-2$ positive summands and $-1$ for all but one of the $j_2$ negative summands (where we choose one to be $k$).

As $A$ is a $P_n^2$-set, $A+A \supset [n+2, 2k-n]$. Thus \begin{eqnarray} \bigcup_{i=0}^{j-2} [L_i, U_i] & \ \subset \ &
f_{j_1-2,\ j_2}(\{1, k\})+A+A, \end{eqnarray}
where \begin{eqnarray} L_i & \  = \ &  j_1 - 2 -i +k(i-j_2)+ n+2 \nonumber\\ U_i &=& j_1 - 2 -i +k(i-j_2) +2k-n. \end{eqnarray}

We see that $L_0 = j_1-kj_2 +n$ and $U_{j-2} = j_1k - j_2 -n$, our two desired endpoints. The proof is completed by showing the intervals $[L_i,U_i]$ cover the desired interval and has no gap with its neighbors.

Since $2n \leq k$, we have:
\begin{eqnarray} L_i -1 & \ = \ &  j_1  -i +k(i-j_2)+ n -1\nonumber\\ &=& (j_1-i+ki-j_2k -1) +n\nonumber\\ & \le &
(j_1-i+ki-j_2k -1) +k- n \nonumber\\
&=& j_1 - 2 -(i-1) +k((i-1)-j_2) +2k-n \nonumber\\ &\le & U_{i-1}. \end{eqnarray}
Thus there are no gaps between the intervals $[L_{i-1}, U_{i-1}], \; [L_i, U_i]$ and they therefore cover the necessary range. \end{proof}

\begin{rek} Note that the above lemma is false if the size of $n$ is unrestricted. To take an extreme example, let $A = \{1, 10\}$ and $n= 9$. Then $A$ is a $P_n^2$-set ($11 \in A+A, \; 0 \in A-A$) but $A$ is not a $P^3_n$-set. \end{rek}

\begin{proof}[Proof of Theorem \ref{thm:genconstr}] Lemmas \ref{lem:gen1} and \ref{lem:gen2} imply that the sets described in Lemma \ref{lem:mainconstr} also work in our generalized case.  The counting argument of \S\ref{sec:lowerboundspercentage} requires no modification. Thus the theorem is proved \emph{provided} we can find an $A$ to start the process.

The following set was obtained by taking elements in $\{2,\dots,49\}$ to be in $A$ with probability\footnote{Note the probability is 1/3 and not 1/2.} $1/3$ (and, of course, requiring $1, 50 \in A$); it took about 300000 sets to find the first one satisfying our conditions: \be A \ = \ \{1, 2, 5, 6, 16, 19, 22, 26, 32, 34, 35, 39, 43, 48, 49,  50\}. \ee To be a $P_{25}^3$-set we need to have $A+A+A \supset [n + 3, 6 n - n] = [28, 125]$ and $A+A-A \supset [-n + 2, 3 n - 1] = [-23, 74]$. A simple calculation shows $A+A+A = [3,150]$, all possible elements, while $A+A-A = [-48, 99] \backslash \{-34\}$ (i.e., every possible element but -34). Thus $A$ is a $P_{25}^3$-set satisfying $|A+A+A| > |A+A-A|$, and thus we have the example we need to prove Theorem \ref{thm:genconstr}.
\end{proof}

\begin{rek} We could also have taken \be A \ = \ \{1, 2, 3, 4, 8, 12, 18, 22, 23, 25, 26, 29, 30, 31, 32, 34,    45, 46, 49, 50\}, \ee which has the same $A+A+A$ and $A+A-A$. \end{rek}


\section{Concluding remarks and future research}\label{sec:concremfutureresearch}

One avenue of future research is to complete the proof of Conjecture \ref{conj:genconstr} and give an elementary example of an infinite family of sets satisfying $|f_{j_1,\ j_2}(A)| > |f_{j_1',\ j_2'}(A)|$. We have reason to believe the correct model is to look for $P_n^j$-sets by choosing the numbers $\{2,\dots,2n-1\}$ to be in $A$ with probability $1/j$ (and, of course, requiring $1, 2n \in A$). Unfortunately the density of such sets appears to decrease rapidly with $n$, and to date straightforward computer searches have been unsuccessful when $j=4$. As we shall see below, perhaps a better algorithm would incorporate choosing elements near the fringes (i.e., near $1$ and $2n$) with a different probability than $1/j$. \\

We also observed earlier (Footnote \ref{footnote:beingpn}) that for a constant $0<\alpha \le 1$, a set randomly chosen from $[1,2n]$ is a $P_{\lfloor \alpha n \rfloor}$-set with probability approaching $1$ as $n\to\infty$. MSTD sets are of course not random, but it seems logical to suppose that this pattern continues.

\begin{conj}\label{conj:MSTDsP_n}
Fix a constant $0<\alpha\le 1/2$. Then as $n\to\infty$ the probability that a randomly chosen MSTD set in $[1,2n]$ containing $1$ and $2n$ is a $P_{\lfloor \alpha n \rfloor}$-set goes to $1$.
\end{conj}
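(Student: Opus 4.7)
The plan is to establish the conjecture by bounding the conditional probability as a ratio of counts. Set $\mathcal{F}_n = \{A \subseteq [1, 2n] : 1, 2n \in A \text{ and } A \text{ is MSTD}\}$ and $\mathcal{G}_n = \{A \in \mathcal{F}_n : A \text{ is not a } P_{\lfloor \alpha n \rfloor}\text{-set}\}$; it suffices to show $|\mathcal{G}_n|/|\mathcal{F}_n| \to 0$. My strategy is to show that $|\mathcal{G}_n|$ decays exponentially while $|\mathcal{F}_n|$ is polynomially close to the full count $2^{2n-2}$, so that the ratio vanishes. Note I do not need any positive-density MSTD bound: the explicit construction from this paper already suffices.

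First, I would upper bound $|\mathcal{G}_n|$ by a union bound over missing middle sums and differences. For each target $k \in [2 + \lfloor \alpha n \rfloor,\, 4n - \lfloor \alpha n \rfloor]$ that must lie in $A+A$, count disjoint pairs $\{a, k-a\} \subseteq \{2, \ldots, 2n-1\}$ summing to $k$: since $k$ is at least $\lfloor \alpha n \rfloor$ from each extreme, there are at least $\lfloor \alpha n / 3 \rfloor$ such interior pairs. Conditioning on $1, 2n \in A$ simply fixes those two coordinates and leaves the remaining $2n-2$ elements independent and uniform, so the events ``both endpoints of pair $p_i$ lie in $A$'' are independent across the interior pairs, each occurring with probability $1/4$. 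Thus $\mathbb{P}(k \notin A+A \mid 1, 2n \in A) \le (3/4)^{\lfloor \alpha n/3 \rfloor}$. Summing over the $O(n)$ relevant $k$ and repeating the analogous argument for $A-A$ gives
\[
|\mathcal{G}_n| \;\le\; 2^{2n-2} \cdot O(n) \cdot (3/4)^{\lfloor \alpha n / 3 \rfloor} \;\le\; 2^{2n}\, e^{-c_\alpha n}
\]
for some constant $c_\alpha > 0$ depending only on $\alpha$.

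Second, I would lower bound $|\mathcal{F}_n|$ directly via Theorem \ref{thm:mainconstruction} applied to the seed set $A_0 = \{1,2,3,5,8,9,13,15,16\}$ from Remark \ref{rek:thmisnontrivial} (so $n_0 = 8$). By construction, every $A(M;k)$ satisfies $\min A(M;k) = 1$ and $\max A(M;k) = 2n_0 + 2k + m$, so fixing the width to $2n$ forces $m = 2n - 2n_0 - 2k$ and restricts $k \in [n_0, n - n_0]$; the counting argument of Section \ref{sec:lowerboundspercentage} then produces $|\mathcal{F}_n| \ge C \cdot 2^{2n}/n^4$ via Theorem \ref{thm:mainconstruction}(2) combined with the lower bound in Lemma \ref{lem:lowerupperbounds}.

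Combining the two bounds, $|\mathcal{G}_n|/|\mathcal{F}_n| \ll n^4\, e^{-c_\alpha n} \to 0$, proving the conjecture. The main obstacle is the first step: the union bound itself is routine, but one must verify uniformly in $k$ that $\Omega(\alpha n)$ disjoint interior pairs remain available even at the edges of the guaranteed range (where one loses $O(1)$ pairs to fringe effects involving $1$ or $2n$, which in any case can only help $k$ land in $A+A$). Once that uniform pair count is secured, the exponential decay overwhelms the polynomial denominator with substantial room to spare, suggesting the conclusion should persist even if $\alpha$ is allowed to tend slowly to $0$ with $n$.
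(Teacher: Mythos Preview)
The paper states this as an open \emph{conjecture} and gives no proof; there is nothing in the paper to compare your argument against. That said, your argument is essentially correct and in fact resolves the conjecture as stated.

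Your two-step strategy is the right one. For the upper bound on $|\mathcal{G}_n|$, dropping the MSTD condition and bounding the number of \emph{all} subsets of $[1,2n]$ containing $1,2n$ that fail to be $P_{\lfloor\alpha n\rfloor}$ is the natural move; the disjoint-pair argument is standard and your count of $\Omega(\alpha n)$ disjoint interior pairs is easily verified at the extremal values of $k$ (for sums, $k=2+\lfloor\alpha n\rfloor$ gives roughly $\alpha n/2$ such pairs; for differences of size $d=2n-1-\lfloor\alpha n\rfloor$ one gets about $(\lfloor\alpha n\rfloor+1)/2$ disjoint pairs after thinning the arithmetic-progression overlap). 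The only cosmetic point is that for differences the pairs $\{a,a-d\}$ are not automatically disjoint and one must select every other one, but this costs only a factor of $2$ and your constant $\alpha/3$ absorbs it.

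For the lower bound on $|\mathcal{F}_n|$, your reading of the counting in \S\ref{sec:lowerboundspercentage} is exactly right: with the seed $A_0$ of Remark~\ref{rek:thmisnontrivial} fixed, every $A(M;k)$ has $\min=1$ and $\max=2n_0+2k+m$, so forcing this maximum to equal $2n$ and summing over $k$ via Lemma~\ref{lem:lowerupperbounds} yields $|\mathcal{F}_n|\gg 2^{2n}/n^4$. The ratio $n^4 e^{-c_\alpha n}\to 0$ then finishes the argument. Your closing remark is also correct: since the exponential beats any polynomial, the same proof goes through with $\alpha=\alpha(n)\to 0$ provided $\alpha(n)\gg (\log n)/n$.
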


In our construction and that of \cite{MO}, a collection of MSTD sets is formed by fixing the fringe elements and letting the middle vary. The intuition behind both is that the fringe elements matter most and the middle elements least. Motivated by this it is interesting to look at all MSTD sets in $[1,n]$ and ask with what frequency a given element is in these sets. That is, what is \be \gamma(k;n) \ = \ \frac{\#\{A: k \in A\ {\rm and}\ A\ {\rm is\ an\ MSTD\ set}\}}{\#\{A: \ A\ {\rm is\ an\ MSTD\ set}\}} \ee as $n\to\infty$?
We can get a sense of what these probabilities might be from Figure \ref{fig:MSTDfreq}.

\begin{figure}
\begin{center}
\scalebox{1.75}{\includegraphics{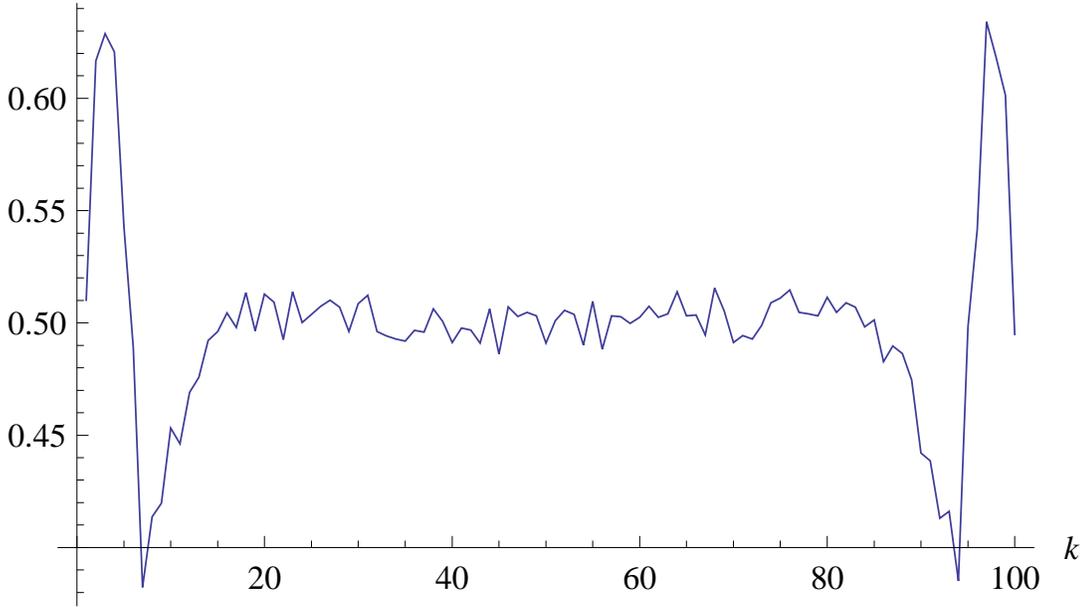}}
\caption{\label{fig:MSTDfreq}\textbf{Estimation of $\gamma(k,100)$ as $k$ varies from $1$ to $100$ from a random sample of 4458 MSTD sets.}}
\end{center}\end{figure}

Note that, as the graph suggests, $\gamma$ is symmetric about $\frac{n+1}2$, i.e. $\gamma(k,n)=\gamma(n+1-k,n)$. This follows from the fact that the cardinalities of the sumset and difference set are unaffected by sending $x \to \alpha x + \beta$ for any $\alpha, \beta$. Thus for each MSTD set $A$ we get a distinct MSTD set $n+1-A$ showing that our function $\gamma$ is symmetric. These sets are distinct since if $A=n+1-A$ then $A$ is sum-difference balanced.\footnote{The following proof is standard (see, for instance, \cite{Na2}). If $A = n+1-A$ then \be |A+A| \ = \ |A + (n+1-A)| \ = \ |n+1+(A-A)| \ = \ |A-A|.\ee}

From \cite{MO} we know that a positive percentage of sets are MSTD sets. By the central limit theorem we then get that the average size of an MSTD set chosen from $[1,n]$ is about $n/2$. This tells us that on average $\gamma(k,n)$ is about $1/2$. The graph above suggests that the frequency goes to $1/2$ in the center. This leads us to the following conjecture:

\begin{conj}\label{conj:50conjecture}
Fix a constant $0<\alpha<1/2$. Then $\lim_{n\rightarrow\infty}{\gamma(k,n)}=1/2$ for $\lfloor \alpha n \rfloor \le k \le n - \lfloor \alpha n \rfloor$.
\end{conj}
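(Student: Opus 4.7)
My plan is a toggling-involution argument conditional on (a quantitative strengthening of) Conjecture~\ref{conj:MSTDsP_n}. Fix $k \in [\lfloor \alpha n \rfloor, n - \lfloor \alpha n \rfloor]$ and consider the fixed-point-free involution $\tau_k(A) = A \triangle \{k\}$ on $2^{[1,n]}$. Writing $\mathcal{M}_n$ for the MSTD subsets of $[1,n]$ and $\mathcal{M}_n^{k,+}$ (resp.\ $\mathcal{M}_n^{k,-}$) for those containing (resp.\ not containing) $k$, define $\mathcal{P}_n = \{A \in \mathcal{M}_n : \tau_k(A) \in \mathcal{M}_n\}$. The set $\mathcal{P}_n$ is automatically $\tau_k$-invariant, so it partitions into $\tau_k$-pairs, giving $|\mathcal{P}_n \cap \mathcal{M}_n^{k,+}| = |\mathcal{P}_n \cap \mathcal{M}_n^{k,-}|$. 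Hence it suffices to prove $|\mathcal{P}_n|/|\mathcal{M}_n| \to 1$.

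To this end I would identify three conditions on $A \in \mathcal{M}_n$ that together force $\tau_k(A) \in \mathcal{M}_n$ and each of which holds for a $(1-o(1))$-fraction of $\mathcal{M}_n$: (i) $\min A \le g(n)$ and $\max A \ge n - g(n)$ for some $g(n) \to \infty$; (ii) $A$ is a $P_m$-set with $m = \lfloor \beta n \rfloor$ for a fixed $\beta \in (0, \alpha)$; and (iii) every $s$ in the bulk sumset interval $[2 \min A + m,\, 2 \max A - m]$ admits at least three representations $s = a_1 + a_2$ with $a_i \in A$, and analogously for bulk differences. Condition~(i) holds for all but an $O(2^{-g(n)})$-fraction of $\mathcal{M}_n$, since by the Martin--O'Bryant positive density $|\mathcal{M}_n| \sim c \cdot 2^n$ one has $|\{A \in \mathcal{M}_n : \min A > g(n)\}| \le |\mathcal{M}_{n-g(n)}| = O(2^{-g(n)}|\mathcal{M}_n|)$. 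Condition~(ii) is Conjecture~\ref{conj:MSTDsP_n} applied after translating $A$ to start at $1$. Condition~(iii) is a robust-$P_m$ strengthening of Conjecture~\ref{conj:MSTDsP_n} that I expect to require new work; see below.

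The key lemma is that $\tau_k(A) \in \mathcal{M}_n$ whenever $A$ satisfies (i)--(iii). If $k \notin A$, set $A' = A \cup \{k\}$: using $\alpha > \beta$ and $g(n) = o(n)$, for $n$ large we have $k \ge \lfloor \alpha n \rfloor > g(n) + m \ge \min A + m$ and symmetrically $k \le \max A - m$, so $(k + A) \cup \{2k\} \subset [2 \min A + m,\, 2 \max A - m] \subset A + A$ by the $P_m$-property, and the same argument handles differences. Hence $A' + A' = A + A$ and $A' - A' = A - A$ as sets, so $A'$ is MSTD. If instead $k \in A$, set $A' = A \setminus \{k\}$: removing $k$ destroys at most two representations of any given bulk sum, so by (iii) every bulk sum retains at least one representation in $A' \times A'$, yielding $[2 \min A + m,\, 2 \max A - m] \subset A' + A'$ (the extrema of $A'$ coincide with those of $A$ since $k$ is in the middle), and similarly for differences; thus $A'$ is again a $P_m$-set. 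Applying the addition case to $A'$ with $k$ re-added then gives $A + A = A' + A'$ and $A - A = A' - A'$, so $A'$ is MSTD.

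The principal obstacle is condition~(iii): that most MSTD sets have at least three representations of every bulk sum and difference. For uniformly random subsets of $[1,n]$ of density near $1/2$ this is routine because each bulk sum has $\Theta(n)$ representations in expectation with standard concentration; since the MSTD event has positive density, a second-moment or Fourier-analytic argument in the style of \cite{MO} should transfer the bound to the MSTD-conditional measure. Alternatively, one could first formulate and prove a ``multiplicity'' strengthening of Conjecture~\ref{conj:MSTDsP_n} asserting that typical MSTD sets are robustly $P_n$ in the sense of (iii); Conjecture~\ref{conj:50conjecture} would then follow immediately from the toggling framework above.
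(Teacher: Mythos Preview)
The statement you are addressing is labeled a \emph{Conjecture} in the paper and is not proved there; it appears in \S\ref{sec:concremfutureresearch} as an open problem, supported only by the heuristic that middle elements matter least and by the numerical evidence in Figure~\ref{fig:MSTDfreq}. There is thus no paper proof to compare your proposal against.

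As for the proposal itself, the toggling-involution framework is a natural idea and the reduction is essentially correct: if conditions (i)--(iii) each hold for a $(1-o(1))$-fraction of $\mathcal{M}_n$, then your pairing argument does give $\gamma(k,n)\to 1/2$. (One small point: in the deletion case you don't need to ``apply the addition case to $A'$''; once you know the bulk of $A'+A'$ is full by (iii) and the fringes are untouched because $k$ lies strictly inside $[\min A + m,\ \max A - m]$, you already have $A'+A'=A+A$ and $A'-A'=A-A$ directly.) However, condition~(ii) is exactly the paper's own Conjecture~\ref{conj:MSTDsP_n}, which is also left open, and condition~(iii) is a strict strengthening of it. So what you have written is not a proof but a reduction of one open conjecture to a harder open conjecture. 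That is a legitimate research outline, and you flag the gap honestly, but it does not settle the statement.
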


\begin{rek} More generally, we could ask which non-decreasing functions $f(n)$ have $f(n)\rightarrow \infty$, $n-f(n)\rightarrow \infty$ and $\lim_{n\rightarrow\infty}{\gamma(k,n)}=1/2$ for all $k$ such that $\lfloor f(n) \rfloor \le k \le n - \lfloor f(n) \rfloor$.
\end{rek}

\appendix



\section{Size of $S(a,b,c;r)$}\label{sec:sizeSabcm}

We sketch the proof that the sum \be S(a,b,c;r) \ = \ \sum_{k=n}^{r/4} \frac1{2^{ak}} \left(1 - \frac1{2^{bk}}\right)^{r/ck} \ee is at least $\left(\frac{\log r}{r}\right)^{a/b}$. We determine the maximum value of the summands \be f(a,b,c;k,r) \ = \ \frac1{2^{ak}} \left(1 - \frac1{2^{bk}}\right)^{r/ck}. \ee Clearly $f(a,b,c;k,r)$ is very small if $k$ is small due to the second factor; similarly it is small if $k$ is large because of the first factor. Thus the maximum value of $f(a,b,c;k,r)$ will arise not from an endpoint but from a critical point.

It is convenient to change variables to simplify the differentiation. Let $u = 2^k$ (so $k = \log u / \log 2$). Then \be g(a,b,c;u,r) \ = \ f(a,b,c;k,r) \ = \ u^{-a} \left(1 - \frac1{u^b}\right)^{u^b \cdot\frac{m\log 2}{cu^b\log u}}. \ee Thus \be g(a,b,c;u,r) \ \approx \ u^{-a} \exp\left(-\frac{r\log 2}{cu^b\log u}\right).\ee Maximizing this is the same as minimizing $h(a,b,c;u,r) = 1/g(a,b,c;u,r)$. After some algebra we find \bea h'(a,b,c;u,r) & \ = \ & \frac{h(a,b,c;u,r)}{cu \log^2 u} \left(ac u^b \log^2 u -r \log 2 \cdot \left(b \log u + 1\right) \right).\ \ \   \eea Setting the derivative equal to zero yields \bea ac u^b \log^2 u & \ =\ & r \log 2 \cdot \left(b \log u + 1\right). \eea As we know $u$ must be large, looking at just the main term from the right hand side yields \bea ac u^b \log u \ \approx \ r b \log 2, \eea or \be\label{eq:utotheblogu} u^b \log u \ \approx \ C r, \ \ \ C \ = \ \frac{b \log 2}{ac}. \ee To first order, we see the solution is \be u_{\max} \ = \ \left(\frac{(Cr)}{\frac{\log(Cr)}{b}}\right)^{\frac1{b}} \ \approx \ C' \left(\frac{r}{\log r}\right)^\frac{1}{b}. \ee Straightforward algebra shows that the maximum value of our summands is approximately $(C'e^{1/b})^{-a} \left(\frac{\log r}{r}\right)^{a/b}$.


\bigskip

\ \\


\begin{thebibliography}{KonS}




\bibitem[FP]{FP}
G. A. Freiman and V. P. Pigarev, \emph{The relation between the invariants R and T}, Number theoretic studies in the Markov spectrum and in the structural theory of set addition (Russian),
Kalinin. Gos. Univ., Moscow, 1973, 172--174.



\bibitem[He]{He}
P. V. Hegarty, \emph{Some explicit constructions of sets with more sums than differences} (2007), Acta Arithmetica \textbf{130} (2007), no. 1, 61--77.



\bibitem[HM]{HM}
P. V. Hegarty and S. J. Miller, \emph{When almost all sets are difference dominated}, to appear in Random Structurs and Algorithms. \texttt{http://arxiv.org/abs/0707.3417}



\bibitem[Ma]{Ma}
J. Marica, \emph{On a conjecture of Conway}, Canad. Math. Bull. \textbf{12} (1969), 233--234.



\bibitem[MO]{MO}
G. Martin and K. O'Bryant, \emph{Many sets have more sums than differences}. To appear in the Proceedings of CRM-Clay
Conference on Additive Combinatorics, Montr\'{e}al 2006.



\bibitem[Na1]{Na1}
M. B. Nathanson, \emph{Problems in additive number theory, 1}. To appear in the Proceedings of CRM-Clay Conference on Additive Combinatorics, Montr\'{e}al 2006.



\bibitem[Na2]{Na2}
M. B. Nathanson, \emph{Sets with more sums than differences},
Integers : Electronic Journal of Combinatorial Number Theory \textbf{7} (2007), Paper A5 (24pp).



\bibitem[Ru1]{Ru1}
I. Z. Ruzsa, \emph{On the cardinality of $A + A$ and $A - A$}, Combinatorics year (Keszthely, 1976), vol. 18, Coll. Math. Soc. J. Bolyai, North-Holland-Bolyai T$\grave{{\rm a}}$rsulat, 1978, 933--938.



\bibitem[Ru2]{Ru2}
I. Z. Ruzsa, \emph{Sets of sums and differences}, S$\acute{{\rm e}}$minaire de Th$\acute{{\rm e}}$orie des Nombres de Paris 1982-1983 (Boston), Birkh$\ddot{{\r a}}$user, 1984, 267--273.



\bibitem[Ru3]{Ru3}
I. Z. Ruzsa, \emph{On the number of sums and differences}, Acta Math. Sci. Hungar. \textbf{59} (1992), 439--447.





\end{thebibliography}
\end{document}